\numberwithin{equation}{section}
\numberwithin{subsection}{section}
\newtheorem*{namedtheorem}{\theoremname}
\newcommand{\theoremname}{testing}
\newtheorem{theorem}{Theorem}
\newtheorem{proposition}[theorem]{Proposition}
\newtheorem{proposition-definition}[theorem]
{Proposition-Definition}
\newtheorem{lemma}[theorem]{Lemma}
\newtheorem*{theorem*}{Theorem}
\theoremstyle{definition}
\newtheorem{example}[theorem]{Example}
\newtheorem{remark}[theorem]{Remark}
\newtheorem*{question*}{Question}
\theoremstyle{remark}
\renewcommand{\mathcal}{\mathscr}
 \newcommand\cB{\mathcal{B}}
\newcommand\cM{\mathcal{M}}
\newcommand\CC{\mathbb{C}} 
 \newcommand\FF{\mathbb{F}}
 \newcommand\PP{\mathbb{P}}
\newcommand\QQ{\mathbb{Q}} \newcommand\RR{\mathbb{R}}
 \newcommand\ZZ{\mathbb{Z}}
\newcommand\fC{\mathfrak{C}}
\newcommand\arr{\ifinner\to\else\longrightarrow\fi}
\newcommand\arrto{\ifinner\mapsto\else\longmapsto\fi}
\renewcommand\H{\operatorname{H}}
\newcommand{\eqdef}{\mathrel{\smash{\overset{\mathrm{\scriptscriptstyle def}} =}}}
\def\displaytimes_#1{\mathrel{\mathop{\times}\limits_{#1}}}
\def\displayotimes_#1{\mathrel{\mathop{\bigotimes}\limits_{#1}}}
\newcommand\aut{\operatorname{Aut}}
\newcommand\pic{\operatorname{Pic}}
\newcommand\spec{\operatorname{Spec}}
\newcommand{\underaut}{\mathop{\underline{\mathrm{Aut}}}\nolimits}
\newlength{\ignora}
\newcommand{\gal}{\operatorname{Gal}}
\DeclareFontFamily{U}{mathx}{\hyphenchar\font45}
\DeclareFontShape{U}{mathx}{m}{n}{
      <5> <6> <7> <8> <9> <10>
      <10.95> <12> <14.4> <17.28> <20.74> <24.88>
      mathx10
      }{}
\DeclareSymbolFont{mathx}{U}{mathx}{m}{n}
\DeclareMathAccent{\widecheck}{0}{mathx}{"71}
\DeclareMathAccent{\wideparen}{0}{mathx}{"75}
\renewcommand{\epsilon}{\varepsilon}
\newcommand{\upic}{\underline{\pic}}
\DeclareMathOperator{\lcm}{lcm}
\begin{document}

\title{On Grothendieck's section conjecture for curves of index $1$}

\author{Giulio Bresciani}

\begin{abstract}
	We prove that every hyperbolic curve with a faithful action of a non-cyclic $p$-group (with a few exceptions if $p=2$) has a twisted form of index $1$ which satisfies Grothendieck's section conjecture. Furthermore, we prove that for every hyperbolic curve $S$ over a field $k$ finitely generated over $\QQ$ there exists a finite extension $K/k$ and a finite étale cover $C\to S_{K}$ such that $C$ satisfies the conjecture.
\end{abstract}

\address{Scuola Normale Superiore\\Piazza dei Cavalieri 7\\
56126 Pisa\\ Italy}
\email{giulio.bresciani@gmail.com}


\maketitle
Curves are smooth, projective and geometrically connected over a base field $k$.

Given a hyperbolic curve $C$ over a field $k$ finitely generated over $\QQ$, there is a short exact sequence of étale fundamental groups
\[1\to\pi_{1}(C_{\bar{k}})\to\pi_{1}(C)\to\gal(\bar{k}/k)\to 1.\]
Denote by $\Pi_{C/k}(k)$ the set of sections of this short exact sequence modulo the action by conjugation of $\pi_{1}(C_{\bar{k}})$, there is a natural map
\[C(k)\to\Pi_{C/k}(k).\]

In 1983, in a famous letter to Faltings \cite{gro97}, Grothendieck conjectured that this map is bijective; this statement is known as the \emph{section conjecture}. Injectivity was already known to Grothendieck and follows from the Mordell-Weil theorem; the hard part is surjectivity. 
Some cases of the conjecture are known, let us briefly recall them.

\begin{itemize}
	\item The conjecture holds if $k$ has an embedding $k\subset\RR$ such that $C(\RR)=\emptyset$. The literature contains several proofs starting with Huisman in 2001 \cite[Proposition 4.2 (7.)]{huisman} \cite[Corollary 3.13]{mochizuki} \cite[Appendix A]{sti10} \cite{pal} \cite{wickelgren}. The result was probably known even before Huisman, though; exact attribution is unclear.
	\item In 2008, J. Stix proved it in the case in which $k=\QQ$ and $C$ maps to a non-trivial Brauer-Severi variety \cite{sti10} (see \cite{giulio-qnf} for a generalization).
	\item Later in 2008, D. Harari and T. Szamuely proved it in the case in which the cohomology class of $\upic^{1}_{C}$ in $\H^{1}(k,\upic^{0}_{C})$ is not in the maximal divisible subgroup \cite{harari-szamuely}.
	\item In 2010, R. Hain proved it for the generic fiber of the universal curve over $\cM_{g}$, $g\ge 5$, over a field finitely generated over $\QQ$ \cite{hain}.
\end{itemize}

Recall that the index of a curve $C$ is the greatest common divisor of the degrees of the residue fields of the closed points of $C$. All the results above share a common feature: they are all about curves of index larger than $1$. This is obvious for curves without real points and for the results of Stix and Harari-Szamuely; it is the so-called Franchetta conjecture -- proved by Beauville (unpublished) and Arbarello-Cornalba \cite{arbarello-cornalba} -- for the result of Hain. While A. Tamagawa showed that it is sufficient to prove the conjecture for curves without rational points \cite[Corollary 101]{sti13}, we still need to work with curves of index $1$. 

It is not by chance that all the available results are about curves of index larger than $1$: at least in the first three cases listed above, it can be checked that the authors prove that the \emph{abelianized} fundamental group (that is, the étale fundamental group of $\upic^{1}_{C}$) has no Galois sections \cite[Chapter 3]{sti13}, even if they do not state it explicitly. If a curve has index $1$, then $\upic^{1}_{C}(k)\neq\emptyset$ and hence the abelianized fundamental group has a Galois section.

In order to study the conjecture for curves of index $1$, we need to rely more on the anabelian nature of the fundamental group.

\subsection*{Curves of index 1} If we fix any finite group $G$, the conjecture for all curves with a free action of $G$ over all finitely generated extensions of $\QQ$ implies the full conjecture; this is an easy consequence of well known facts \cite[Corollary 107, Proposition 111]{sti13}. Because of this, working with curves that have many automorphisms is not restrictive.

We prove that every hyperbolic curve with a faithful action of a non-cyclic $p$-group (with a few exceptions if $p=2$) has a twisted form of index $1$ which satisfies the conjecture.

\begin{theorem}\label{theorem:index}
	Let $C$ be a hyperbolic curve over a field $k$ finitely generated over $\QQ$ with a faithful action of a finite, non-cyclic $p$-group $G$ for some prime $p$. 
	
	If $p=2$, $G$ has a cyclic subgroup of index $2$ and the action is not free, furthermore assume that $G$ is not dihedral and that the genus is odd.
	
	There exists a finitely generated extension $K/k$ and a twisted form $\fC$ of $C_{K}$ such that $\fC$ has index $1$ and $\Pi_{\fC/K}(K)=\emptyset$.
\end{theorem}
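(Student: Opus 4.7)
The plan is to construct a twisted form $\fC = C^\xi$ of $C_K$ equipped with a map $\fC \to D$ to a suitable quotient curve, so that $\Pi_{D/K}(K) = \emptyset$ forces $\Pi_{\fC/K}(K) = \emptyset$ by functoriality of $\pi_1$, while the twist $\xi$ is chosen to produce closed points of coprime degrees on $\fC$ and hence index $1$.

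First, reduce to the case $G = (\ZZ/p)^2$: most non-cyclic $p$-groups contain a subgroup isomorphic to $(\ZZ/p)^2$ (the exceptions being certain $p = 2$ groups excluded or treated separately under the stated hypotheses), and restricting a faithful action to such a subgroup remains faithful. Set $D = C/G$ and let $\pi\colon C \to D$ be the induced $G$-Galois cover. After enlarging $k$ so that $\mu_p \subset K$, Kummer theory identifies $\H^1(K, G) \cong (K^*/K^{*p})^2$; for $\xi \in \H^1(K, G)$, the twisted form $\fC = C^\xi$ is again a $G$-cover of $D$, and its fiber above a closed point $d \in D$ of degree $m$ decomposes according to the image of $\xi$ in $\H^1(\kappa(d), G)$, giving closed points of $\fC$ of degrees $m \cdot p^{a_i}$.

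For $\Pi_{\fC/K}(K) = \emptyset$, enlarge $K$ further so that $D_K$ satisfies the section conjecture vacuously, using one of the previously known cases: most directly, arrange an embedding $K \hookrightarrow \RR$ with $D_K(\RR) = \emptyset$ and invoke Huisman's theorem; if that is obstructed, fall back to Stix (via a map from $D_K$ to a non-trivial Brauer--Severi) or Harari--Szamuely (via non-divisibility of the $\upic^1$-class). For index $1$, choose $\xi$ so that there are closed points $d_1, d_2 \in D$ of coprime degrees with $\xi|_{\kappa(d_i)} = 0$: the fibers above $d_i$ then contribute $|G|$ closed points of $\fC$ of degrees $\deg(d_1)$ and $\deg(d_2)$ respectively, so that the $\gcd$ of closed-point degrees of $\fC$ divides $\gcd(\deg d_1, \deg d_2) = 1$.

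The main obstacle is realizing all these requirements simultaneously: $D_K$ must have $\Pi_{D_K/K}(K) = \emptyset$ while also admitting closed points of coprime degrees at which $\xi$ can trivialize without $\xi$ becoming trivial over $K$ itself (which could re-introduce $K$-points of $\fC$). For quotients $D$ of low genus or with unavoidable real loci, Huisman's theorem does not apply and more subtle anabelian inputs are required. The exceptional $p=2$ hypotheses likely reflect exactly these failures --- the cases where $G$ does not contain a Klein four-group or where the induced quotient $D$ is too rigid to admit the required flexibility in $K$.
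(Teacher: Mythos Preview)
Your strategy has a fundamental incompatibility that cannot be repaired. You want $\Pi_{D_K/K}(K)=\emptyset$ via one of the previously known cases (real points, Brauer--Severi, non-divisible $\upic^1$-class), and simultaneously you want closed points $d_1,d_2\in D_K$ of coprime degree. But every one of those known cases forces $D_K$ to have index strictly greater than $1$, as the paper's introduction explains: they all prove that the \emph{abelianized} fundamental group already has no section, which implies $\upic^1_{D_K}(K)=\emptyset$ and hence index $>1$. Concretely, if $K\hookrightarrow\RR$ and $D_K(\RR)=\emptyset$, then every closed point of $D_K$ has even degree (an odd-degree residue field over a subfield of $\RR$ always admits a real embedding), so you cannot find points of coprime degree. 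The same obstruction appears for the Stix and Harari--Szamuely cases. Your two requirements on $D_K$ are therefore mutually exclusive, and the plan collapses at exactly the place you flagged as ``the main obstacle''.

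There is also a secondary gap in the reduction to $G=(\ZZ/p)^2$: generalized quaternion $2$-groups (e.g.\ $Q_8$) contain no copy of $(\ZZ/2)^2$, yet they are not excluded by the theorem's hypotheses ($Q_8$ is non-cyclic, non-dihedral, and has a cyclic subgroup of index $2$).

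The paper proceeds entirely differently and does \emph{not} try to make the quotient section-free. It takes $S=[C/G]$ as a stack (with $S(k)\neq\emptyset$ after a finite extension), sets $K=k([P/G])$ where $P=\upic^1_C$, and twists by the generic $G$-torsor $\spec k(P)\to\spec K$. Index $1$ comes from Lichtenbaum's period--index theorem, since by construction $\upic^1_{\fC}$ has a $K$-point; the extra $p=2$ hypotheses are used only to rule out index $2$ via a Riemann--Hurwitz parity count. The absence of Galois sections comes from a genuinely anabelian obstruction (Proposition~\ref{proposition:obstruction}): if the torsor lifted to $\underline{\pi}_1(S)$, a weight argument would extend the section over all of $[P/G]$, and since $\pi_1(P_{\bar k})$ is abelian this would embed a pro-$p$ group of finite derived length, surjecting onto $G$, into $\pi_1(S_{\bar k})$. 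Proposition~\ref{proposition:subcyclic} then forces such a subgroup to be pro-cyclic or $D_{2^\infty}$, contradicting the hypotheses on $G$. This is where the non-abelian structure of $\pi_1$ does real work, and it is precisely what your bootstrapping-from-known-cases approach cannot access.
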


There is a birational version of the section conjecture which is arguably more approachable, and more is known about it \cite{koenigsmann} \cite{esnault-wittenberg} \cite{hoshi} \cite{sti15} \cite{saidi-tyler} \cite{giulio-tbir}. In particular, H. Esnault and O. Wittenberg have proved a result about curves of index $1$, let us describe it.

Let $C$ be a curve over a number field $k$. Denote by $G_{k(C)}^{\rm [ab]}$ the Galois group of $\bar{k}(C)^{\rm ab}/k(C)$, where $\bar{k}(C)^{\rm ab}$ is the maximal abelian subextension of $\overline{k(C)}/\bar{k}(C)$. There is a natural projection $G_{k(C)}^{\rm [ab]}\to\gal(\bar{k}/k)$; we might think of it as a birational, abelianized version of $\pi_{1}(C)\to\gal(\bar{k}/k)$. Assuming that the Tate-Shafarevich group of the Jacobian of $C$ is finite, H. Esnault and O. Wittenberg have proved that $C$ has index $1$ if and only if $G_{k(C)}^{\rm [ab]}\to\gal(\bar{k}/k)$ is split.

While at first glance Theorem \ref{theorem:index} might seem in conflict with this result, they are actually perfectly compatible. In fact, if we believe in Grothendieck's anabelian philosophy and $C$ is a hyperbolic curve of index $1$ with $C(k)=\emptyset$, then we should expect $\pi_{1}(C)\to\gal(\bar{k}/k)$ to be non-split even though the \emph{abelianized} fundamental group has a splitting induced by some point of $\upic^{1}_{C}(k)\neq\emptyset$ (ignoring the ``birational'' side of the story for clarity's sake).

\subsection*{Étale covers} 

Recall that, if $C\to S$ is a finite étale cover and the section conjecture holds for $C_{K}$ \emph{for all finite extensions $K/k$}, then the section conjecture holds for $S$ \cite[Proposition 111]{sti13}. Using the same methods as in Theorem~\ref{theorem:index}, we prove that every hyperbolic curve, after a finite extension of the base field, has a finite étale cover which satisfies the conjecture.

\begin{theorem}\label{theorem:cover}
	Let $S$ be a hyperbolic curve over a field $k$ and $\ell$ a prime. Assume either that $k$ is finitely generated over $\QQ$, or that it is a finite extension of $\QQ_{p}$ with $p\neq\ell$. There exists a finite extension $K/k$ and a finite étale $\left(\ZZ/{\ell}\right)^{2}$-cover $\fC\to S_{K}$ such that $\Pi_{\fC/K}(K)=\emptyset$.
\end{theorem}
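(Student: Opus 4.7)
The strategy mirrors that of Theorem~\ref{theorem:index}: construct $\fC$ as a twisted form of a fixed $(\ZZ/\ell)^{2}$-Galois étale cover of $S_{L}$ for a suitable finite extension $L/k$, and exploit the flexibility in the twist to kill all sections. For the initial cover, I use that $\H_{1}^{\mathrm{et}}(S_{\bar{k}},\FF_{\ell})$ has $\FF_{\ell}$-dimension at least $2$ for any hyperbolic $S$, so $S_{\bar{k}}$ admits a $(\ZZ/\ell)^{2}$-Galois étale cover. Replacing $k$ by a finite extension $L$, I arrange that the cover descends to $C\to S_{L}$, that $\gal_{L}$ acts trivially on $G\coloneqq(\ZZ/\ell)^{2}$, that $\mu_{\ell}\subset L$, and, in the $p$-adic case, that $S$ and the cover extend to an étale $G$-cover $\cC\to\cS$ over $\spec O_{L}$ (possible because the cover has order prime to the residue characteristic).

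The chosen cover corresponds to a surjection $\phi\colon\pi_{1}(S_{L})\twoheadrightarrow G$, and for each $\alpha\in\H^{1}(L,G)=\hom(\gal_{L},G)$ the twist $\fC_{\alpha}\coloneqq C\wedge^{G}\alpha\to S_{L}$ is again an étale $G$-cover. A standard cocycle computation identifies $\Pi_{\fC_{\alpha}/L}(L)$ with the fibre over $\alpha$ of the obstruction map
\[
\mathrm{ob}\colon\Pi_{S/L}(L)\longrightarrow\H^{1}(L,G),\qquad\sigma\longmapsto\phi\circ\sigma,
\]
so the theorem reduces to producing a class $\alpha\notin\mathrm{ob}(\Pi_{S/L}(L))$.

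To achieve this I bound the image of $\mathrm{ob}$. The key input is Grothendieck's specialisation theorem: for $\ell\neq p$ and good reduction, $\pi_{1}^{(\ell)}(S_{\bar{L}})$ agrees with the $\ell$-part of the fundamental group of the special fibre and carries trivial inertia action. Since $\phi$ factors as $\pi_{1}(S_{L})\to\pi_{1}(\cS)\twoheadrightarrow G$, this forces $\phi\circ\sigma$ to be trivial on $I_{L}$ for every section $\sigma$, so $\mathrm{ob}(\Pi_{S/L}(L))\subset\H^{1}_{\mathrm{nr}}(L,G)$. In the $p$-adic case, since $\mu_{\ell}\subset L$, local Tate duality gives $|\H^{1}(L,G)|=\ell^{4}$ while $|\H^{1}_{\mathrm{nr}}(L,G)|=\ell^{2}$, so the inclusion is strict and any ramified $\alpha$ works. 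In the global case the same argument at each prime of good reduction of $C\to S_{L}$ confines the image to the subgroup unramified outside a finite set $T$, and Kummer theory (using $\mu_{\ell}\subset L$) provides classes in $\H^{1}(L,G)$ ramified at any prescribed prime outside $T$. In either case, set $K=L$ and $\fC=\fC_{\alpha}$. The main technical obstacle is justifying that $\phi\circ\sigma$ is unramified for \emph{every} section in $\Pi_{S/L}(L)$, not merely for those arising from integral points on the smooth model; this is where the specialisation theorem for $\pi_{1}^{(\ell)}$ and the anabelian compatibility between sections and good reduction enter, in the same spirit as the twisting construction used to prove Theorem~\ref{theorem:index}.
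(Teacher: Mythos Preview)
Your overall strategy is sound and matches the paper's: construct a fixed $(\ZZ/\ell)^{2}$-cover $C\to S_{L}$, identify $\Pi_{\fC_{\alpha}/L}(L)$ with the fibre of the map $\mathrm{ob}\colon\Pi_{S/L}(L)\to\H^{1}(L,G)$, and exhibit a class $\alpha$ outside the image. The divergence is in how you try to produce such an $\alpha$, and that is where the gap lies.

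You claim that $\mathrm{ob}(\Pi_{S/L}(L))\subset\H^{1}_{\mathrm{nr}}(L,G)$, arguing that since $\phi$ factors through $\pi_{1}(\cS)$, the composition $\phi\circ\sigma$ must be trivial on $I_{L}$. This inference is not justified. What the factorisation through $\pi_{1}(\cS)\simeq\pi_{1}(\cS_{s})$ gives you is that $\sigma(I_{L})$ maps into the geometric part $\pi_{1}(\cS_{s,\bar\kappa})$, not that it maps to the identity there; and the restriction of $\phi$ to $\pi_{1}(\cS_{s,\bar\kappa})$ is surjective onto $G$, so there is no reason for $\phi(\sigma(I_{L}))$ to vanish. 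Grothendieck's specialisation theorem controls the geometric $\pi_{1}$ and the outer $I_{L}$-action on it (which is indeed trivial), but it says nothing about where an \emph{arbitrary} section sends inertia. Your final sentence essentially concedes this: ``anabelian compatibility between sections and good reduction'' is precisely the sort of statement the section conjecture would imply, not an available input. Already for an elliptic curve with good reduction one sees ramified classes in the image of the analogous map, so hyperbolicity would have to be used in an essential and currently unknown way.

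The paper sidesteps this entirely. Instead of constraining the image of $\mathrm{ob}$ by ramification, it enlarges the cover: one first constructs a Galois $E$-cover $U\to S$ with $E=\ZZ/\ell^{2}\rtimes_{\phi}\ZZ/\ell$ sitting above the $(\ZZ/\ell)^{2}$-cover $C\to S$ (this uses only that the fundamental group of a hyperbolic curve surjects onto a free group of rank $2$). Proposition~\ref{proposition:obstruction} then says that any $\alpha\in\H^{1}(K,G)$ not lifting to $\H^{1}(K,E)$ lies outside $\mathrm{im}(\mathrm{ob})$. Producing such an $\alpha$ is a purely Galois-theoretic problem about $K$ (Lemma~\ref{lemma:semi}), solved by an explicit analysis of the pro-$\ell$ quotient of $\gal_{K}$ in the $p$-adic case and a reduction to that case in the global case. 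No control over arbitrary sections of $S$ is needed.
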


Notice that in the statement of Theorem~\ref{theorem:cover} $K/k$ is a \emph{finite} extension, while in Theorem~\ref{theorem:index} $K/k$ is \emph{finitely generated}: we obtain a better control over $K/k$ in exchange for losing control over the index of $\fC$.

\subsection*{Acknowledgements}

I would like to thank A. Vistoli for many useful discussions: this paper would not exist without the frequent brainstorming sessions I had with him. I would also like to thank J. Stix for many useful comments which have substantially improved the exposition of the paper. Among other things, he corrected a minor mistake in the proof of Theorem~\ref{theorem:cover}, he told me about the isomorphism $\ZZ/{2}\ast\ZZ/{2}\simeq\ZZ\rtimes\ZZ/{2}$ (see Lemma~\ref{lemma:dihedral}), he provided an alternative proof of Proposition~\ref{proposition:obstruction} (see Remark~\ref{remark:altstix}), and a discussion with him inspired a much shorter proof using cohomological dimension of Proposition~\ref{proposition:subcyclic} in the case of curves (unfortunately, I was not able to adapt it to the case of orbicurves). Finally, I would like to thank H. Esnault for some useful remarks.

\subsection*{Étale fundamental gerbes}

We use the formalism of \emph{étale fundamental gerbes} introduced by N. Borne and A. Vistoli \cite[\S 8]{borne-vistoli} \cite[Appendix A]{giulio-anab}: it is an alternative point of view on the theory of étale fundamental groups which is particularly well-suited for studying the section conjecture. From this point of view, the main object is \emph{not} the fundamental group, but \emph{the space of Galois sections itself}, with a richer structure and constructed directly without passing through fundamental groups.

If $S$ is a geometrically connected scheme (or algebraic stack) over a field $k$, the étale fundamental gerbe of $S$ is a pro-finite étale gerbe $\Pi_{S/k}$ over $k$ with a structural morphism $S\to\Pi_{S/k}$ which is universal for this property: for every morphism $S\to\Phi$ where $\Phi$ is a pro-finite étale gerbe, there exists a factorization $S\to\Pi_{S/k}\to\Phi$ unique up to a unique isomorphism. 

The space of Galois sections of $S$ is in natural bijection with the set of isomorphism classes of the groupoid $\Pi_{S/k}(k)$. If $S$ is a hyperbolic curve over a field finitely generated over $\QQ$, then it is known that $\Pi_{S/k}(k)$ is already a set (as opposed to a groupoid), namely its elements have no non-trivial automorphisms \cite[Proposition 104]{sti13} \cite[Lemma 2.6]{giulio-anab}.

\section{An obstruction for Galois sections of twisted forms}\label{sect:obstruction}

We give an obstruction for the existence of Galois sections of twisted forms. We developed the obstruction for completely different purposes, namely proving the existence of certain cycles on $\PP^{2}$ not defined over their field of moduli \cite[Theorem 4]{giulio-fmod} \cite{giulio-points} \cite{giulio-p2}, but the technique is completely general and, as A. Vistoli pointed out to us, can be applied to the study of the section conjecture. As J. Stix pointed out to us, a similar obstruction has already been used by M. Stoll \cite[Lemma 5.5]{stoll} (see also \cite[Theorem 142]{sti13}).

Let $S$ be a geometrically connected algebraic stack of finite type over a field $k$. Our main example is the case in which $k$ is finitely generated over $\QQ$ and $S$ is a smooth, projective, hyperbolic orbicurve, but the obstruction is completely general.

We stress the fact that allowing $S$ to be an algebraic stack, rather than a scheme, is an important assumption. While we could work with schemes, we would obtain significantly weaker results. For instance, restricting ourselves to schemes would force us to assume that the action is free in Theorem~\ref{theorem:index}.

Consider a short exact sequence of group schemes
\[1\to N\to E\to G\to 1\]
pro-finite étale over $k$. Suppose that we have a geometrically connected $E$-torsor $U\to S$, i.e. an $E$-covering; denote by $C$ the quotient stack $[U/N]$, it is a $G$-covering.

\[\begin{tikzcd}
	U\ar[d,"N"]\ar[dd,bend right=45,swap,"E"]	\\
	C\ar[d,"G"]								\\
	S	
\end{tikzcd}\]

\begin{example}
	If $E$ is a \emph{constant} finite group scheme, i.e. just a finite group, then $U$ is simply a Galois $E$-covering, while $C=[U/N]$ is a Galois $G$-covering. 
\end{example}

\begin{example}
	Consider $s:\spec k\to\Pi_{S/k}$ a Galois section and define $U$ as the fibered product
	\[U\eqdef \spec k\times_{\Pi_{S/k}}S.\]
	The morphism $U\to S$ is the projective limit of all the étale neighbourhoods of $s$: it is a $k$-form of the universal covering of $S$, namely $U_{k^{s}}\to S$ is the universal covering, where $k^{s}/k$ is a separable closure. We have that $U$ is an $E=\underaut_{\Pi_{S/k}}(s)$-torsor where $E$ is the group scheme of automorphisms of $s\in\Pi_{S/k}(k)$ and $C=[U/N]$ is some étale neighbourhood of $s$. If $s$ is associated with a rational point $p\in S(k)$, then $E=\underline{\pi}_{1}(S,p)$ is the étale fundamental group scheme of $(S,p)$.
\end{example}

The $E$-torsor $U\to S$ induces morphisms
\[S\to\cB E\to\cB G,\]
where $\cB E$ and $\cB G$ are the classifying stacks of $E$ and $G$ respectively. Recall that the set of rational points modulo isomorphism $\cB G(k)/\sim$ of $\cB G$ is the set of $G$-torsors over $\spec k$ modulo isomorphism, which coincides with the non-abelian cohomology set $\H^{1}(k,G)$, and similarly for $E$.

Choose $T\to \spec k$ a $G$-torsor. We may use $T$ to define a twisted form $\fC$ of $C$, namely
\[\fC\eqdef C\times^{G}T=(C\times T)/G\]
where $G$ acts on both sides of $C\times T$ simultaneously. Equivalently, we may define $\fC$ as the fibered product
\[\begin{tikzcd}
	\fC\rar\dar		&	\spec k\dar		\\
	S\rar			&	\cB G,
\end{tikzcd}\]
where the map $\spec k\to\cB G$ is the one given by $T$.

\begin{proposition}\label{proposition:obstruction}
	If $T$ does not lift to $E$, i.e. its class is not in the image of $\H^{1}(k,E)\to\H^{1}(k,G)$, then $\Pi_{\fC/k}(k)=\emptyset$.
\end{proposition}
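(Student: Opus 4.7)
My plan is to derive a contradiction from a hypothetical Galois section $s\colon\spec k\to\Pi_{\fC/k}$ by extracting from $s$ an $E$-torsor over $k$ whose image in $\H^{1}(k,G)$ is the class of $T$. Before anything else, I would verify that $\Pi_{\fC/k}$ is defined, i.e.\ that $\fC$ is geometrically connected: after base change to $\bar{k}$ the torsor $T$ trivialises, so $\fC_{\bar{k}}\simeq C_{\bar{k}}$, which is connected because $U$ is geometrically connected and $C=[U/N]$.

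The $E$-torsor $U\to S$ classifies a morphism $S\to\cB E$, and composing with the projection $\fC\to S$ produces $\fC\to\cB E$. Since $\cB E$ is a pro-finite étale gerbe over $k$, the universal property of $\Pi_{\fC/k}$ yields a factorization $\rho\colon\Pi_{\fC/k}\to\cB E$, and then the composition $\rho\circ s\colon\spec k\to\cB E$ classifies an $E$-torsor $T_{E}$ over $k$.

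It then remains to show that the class of $T_{E}$ maps to $[T]$ under $\H^{1}(k,E)\to\H^{1}(k,G)$, and this is the step I expect to require the most care. The key input is the defining square $\fC=S\times_{\cB G}\spec k$, in which the right vertical map classifies $T$: it equates the two morphisms $\fC\to S\to\cB E\to\cB G$ and $\fC\to\spec k\xrightarrow{T}\cB G$. Both factor through $\Pi_{\fC/k}$ by its universal property, so by uniqueness the resulting maps $\Pi_{\fC/k}\to\cB G$ coincide. On one side this gives $\Pi_{\fC/k}\xrightarrow{\rho}\cB E\to\cB G$, and on the other $\Pi_{\fC/k}\to\spec k\xrightarrow{T}\cB G$ via the structural morphism of the gerbe. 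Composing with $s$, and using that $s$ followed by the structural morphism is the identity of $\spec k$, I obtain precisely that the image of $[T_{E}]$ in $\H^{1}(k,G)$ is $[T]$, contradicting our hypothesis.

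The subtle point will be making the compatibility of the two factorizations of $\fC\to\cB G$ through $\Pi_{\fC/k}$ fully rigorous (in particular, recognising $\spec k$ as a trivial pro-finite étale gerbe to which the universal property of $\Pi_{\fC/k}$ applies); once this is set up, the rest is a clean $2$-categorical diagram chase.
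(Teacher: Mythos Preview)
Your argument is correct and is essentially the paper's own proof, just unpacked: the paper introduces the gerbe of liftings $\Phi_{T}=\spec k\times_{\cB G}\cB E$, observes that the defining square of $\fC$ yields a map $\fC\to\Phi_{T}$, factors it through $\Pi_{\fC/k}$, and concludes since $\Phi_{T}(k)=\emptyset$. Your ``subtle point'' about the two factorizations $\Pi_{\fC/k}\to\cB G$ agreeing is exactly the datum of a morphism $\Pi_{\fC/k}\to\Phi_{T}$, and your $T_{E}$ is the image of $s$ under that morphism; phrasing it via $\Phi_{T}$ makes the compatibility you worry about automatic from the universal property of the fibered product.
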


The underlying idea is the following (we are going to give a slightly different proof in order to avoid using the theory of group actions on stacks). We have a morphism $C\to\cB N$ associated with the covering $U\to C$, this induces a morphism between twists $\fC=C\times^{G}T\to\cB N\times^{G}T$. By definition of quotient stack, $\cB N\times^{G}T=[\cB N\times T/G]$ is the so-called \emph{gerbe of liftings} of $T$ to $E$, which has no rational sections since $T$ does not lift to $E$. By the universal property of the fundamental gerbe, there is a splitting $\fC\to\Pi_{\fC/k}\to\cB N\times^{G}T$, hence $\Pi_{\fC/k}(k)$ is empty as well.

\begin{proof}
	Consider the fibered product
	\[\begin{tikzcd}
		\Phi_{T}\rar\dar	&	\spec k\dar["T"]		\\
		\cB E\rar			&	\cB G
	\end{tikzcd}\]
	where $\Phi_{T}$ is the gerbe of liftings of $T$ to $E$. Since the two compositions $\fC\to\spec k\to\cB G$ and $\fC\to S\to\cB E\to\cB G$ are equivalent by construction, we get an induced map $\fC\to\Phi_{T}=\spec k\times_{\cB G}\cB E$.
	
	Since $\cB E$, $\cB G$ are pro-finite étale gerbes and $E\to G$ is surjective, then $\Phi_{T}$ is a pro-finite étale gerbe over $\spec k$ as well, and hence we have an induced map $\Pi_{\fC/k}\to\Phi_{T}$ by the universal property of the étale fundamental gerbe. By construction, $\Phi_{T}(k)/\sim$ is the set of $E$-torsors over $k$ which lift $T$, by hypothesis it is empty. It follows that $\Pi_{\fC/k}(k)$ is empty as well.
\end{proof}

\begin{remark}\label{remark:altstix}
	Let us sketch an alternative proof of Proposition~\ref{proposition:obstruction} in terms of étale fundamental groups, for which I thank J. Stix. Let $k^{s}$ be a separable closure of $k$ and $\Gamma=\gal(k^{s}/k)$. The torsors $U\to C\to S$ and $T\to \spec k$ correspond to homomorphisms 
	\[\pi_{1}(S)\to E(k^{s})\rtimes\Gamma\to G(k^{s})\rtimes \Gamma,\]
	\[\Gamma\to G(k^{s})\rtimes\Gamma\]
	over $\Gamma$, and $\pi_{1}(\fC)$ identifies with the fibered product of $\pi_{1}(S)$ and $\Gamma$ over $G(k^{s})\rtimes\Gamma$. A Galois section $\Gamma\to\pi_{1}(\fC)$ induces by composition a homomorphism $\Gamma\to\pi_{1}(\fC)\to\pi_{1}(S)\to E(k^{s})\rtimes\Gamma$. Since $\pi_{1}(\fC)$ is a fibered product over $G(k^{s})\rtimes\Gamma$, this gives a lifting $\Gamma\to E(k^{s})\rtimes\Gamma$ of the homomorphism $\Gamma\to G(k^{s})\rtimes\Gamma$ corresponding to $T$, which is absurd.
\end{remark}

\begin{remark}\label{remark:fundlift}
	In the particular case in which $U$ is the pro-finite limit of étale neighbourhoods of a Galois section of $S$, and hence $\Pi_{C/k}=\cB N$, it is not difficult to see that the gerbe of liftings $\Phi_{T}\simeq \cB N\times^{G}T=\Pi_{C/k}\times^{G}T$ coincides with the étale fundamental gerbe of the twist $\fC$.
\end{remark}

\section{About the étale fundamental group of orbicurves}

Let $S$ be a complete orbicurve over $\CC$ with fundamental group $\pi$. The aim of this section is to prove that, if $S$ is hyperbolic, a pro-$p$ group of finite derived length with a closed embedding in $\hat{\pi}$ is pro-cyclic (with one exception if $p=2$), see Proposition~\ref{proposition:subcyclic}. While this is an essential tool for us, it is only used at the very last step of the proof of Theorem~\ref{theorem:index}, and its pourpose might seem not clear before reaching that point. Furthermore, the proof of Proposition~\ref{proposition:subcyclic} is quite long. Because of this, depending on taste the reader might want to read the proof of Theorem~\ref{theorem:index} in Section~\ref{section:index} first.

We work over $\CC$ for simplicity. Since the étale fundamental group is invariant under base change of algebraically closed fields, the results hold over any algebraically closed field of characteristic $0$.

Recall that the rational Euler characteristic of $S$ is defined as 
\[\chi(S)=2-2g-\sum_{s}\frac{\gamma_{s}-1}{\gamma_{s}},\]
where the sum runs over non-trivial orbifold points $s$ of degree $\gamma_{s}$, namely $S$ is locally isomorphic to $[\CC/\mu_{\gamma_{s}}]$ around $s$. The orbicurve $S$ is hyperbolic if $\chi(S)<0$. The following is well known to experts, but we couldn't find a reference.

\begin{lemma}
	If $f:S'\to S$ is a finite étale cover of degree $d$ of complete orbicurves, then $\chi(S')=d\chi(S)$.
\end{lemma}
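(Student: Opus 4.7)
The plan is to reduce the statement to the classical Riemann--Hurwitz formula applied to the coarse moduli map $\bar f : |S'| \to |S|$, and then to balance the ramification correction against the orbifold correction $\sum (\gamma_s - 1)/\gamma_s$.

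First, I would carry out the local analysis. An étale neighbourhood of an orbifold point $s \in S$ of order $\gamma_s$ has the form $[\Delta/\mu_{\gamma_s}]$ with $\Delta$ a disc. A finite étale cover of $[\Delta/\mu_{\gamma_s}]$ pulled back to $\Delta$ becomes a finite $\mu_{\gamma_s}$-equivariant cover of $\Delta$, which decomposes into orbits of the form $\mu_{\gamma_s}/\mu_{\gamma_{s'}}$ for subgroups $\mu_{\gamma_{s'}} \subset \mu_{\gamma_s}$. Therefore every preimage $s'_i$ of $s$ is an orbifold point whose order $\gamma_{s'_i}$ divides $\gamma_s$, and in local coordinates the induced map on coarse spaces is $w \mapsto w^{\gamma_s/\gamma_{s'_i}}$, i.e.\ it is ramified at $s'_i$ with ramification index $e_{s'_i} = \gamma_s/\gamma_{s'_i}$.

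Next, I would record the degree relation. Fibres over non-orbifold points of $S$ consist of $d$ non-orbifold points of $S'$, so $\bar f$ has degree $d$. Over an orbifold point $s$ the local analysis gives
\[
 \sum_{s'_i \mapsto s} \frac{\gamma_s}{\gamma_{s'_i}} = d, \qquad \text{equivalently} \qquad \sum_{s'_i \mapsto s} \frac{1}{\gamma_{s'_i}} = \frac{d}{\gamma_s}.
\]
Applying Riemann--Hurwitz to $\bar f$, noting that ramification can only occur over orbifold points of $S$, yields
\[
 2 - 2g' \;=\; d(2-2g) \;-\; \sum_{s \text{ orb}} \sum_{s'_i \mapsto s} \Bigl(\frac{\gamma_s}{\gamma_{s'_i}} - 1\Bigr).
\]

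Finally, I would substitute into the definition of $\chi(S')$ and reorganize the sum over $S'$ by image in $S$, using that non-orbifold points of $S$ contribute nothing:
\[
 \chi(S') \;=\; (2-2g') - \sum_{s \text{ orb}} \sum_{s'_i \mapsto s} \frac{\gamma_{s'_i}-1}{\gamma_{s'_i}}.
\]
A short computation shows that for each orbifold point $s$,
\[
 \sum_{s'_i \mapsto s}\Bigl[\Bigl(\frac{\gamma_s}{\gamma_{s'_i}}-1\Bigr) + \frac{\gamma_{s'_i}-1}{\gamma_{s'_i}}\Bigr] = (\gamma_s - 1)\sum_{s'_i \mapsto s} \frac{1}{\gamma_{s'_i}} = d \cdot \frac{\gamma_s - 1}{\gamma_s},
\]
using the degree relation above. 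Plugging this back gives $\chi(S') = d\,\chi(S)$. The only non-trivial step is the local analysis: once one correctly identifies that étale covers of $[\Delta/\mu_{\gamma_s}]$ correspond to subgroups of $\mu_{\gamma_s}$ and that the coarse map ramifies with index $\gamma_s/\gamma_{s'_i}$, everything else is bookkeeping.
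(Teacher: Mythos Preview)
Your proof is correct and follows essentially the same route as the paper: both apply Riemann--Hurwitz to the map of coarse spaces, use the key local relation $e_{s'} = \gamma_{f(s')}/\gamma_{s'}$ coming from étaleness, and simplify via $\sum_{s'\mapsto s} e_{s'} = d$. You give a slightly more detailed local analysis and group the sum by image point rather than by points of $S'$, but the argument is the same.
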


\begin{proof}
	Denote by $g,g'$ the genus of $S,S'$ respectively. If $s'\in S'$, denote by $e_{s'}$ the ramification index at $s'$ of the underlying morphism of coarse moduli spaces. By Riemann-Hurwitz, we get
	\[2g'-2=d(2g-2)+\sum_{s'\in S'}(e_{s'}-1),\]
	where the sum is finite since $e_{s'}=1$ if $s'$ is not a ramification point. Hence,
	\[\chi(S')=d(2-2g)-\sum_{s'\in S'}e_{s'}-1+\frac{\gamma_{s'}-1}{\gamma_{s'}}.\]
	Again, the sum is finite since $\gamma_{s'}=1$ if $s'$ is not an orbifold point. Since $S'\to S$ is étale, then $\gamma_{f(s')}=e_{s'}\gamma_{s'}$. We thus get
	\[\chi(S')=d(2g-2)+\sum_{s'\in S'}\frac{\gamma_{f(s')}-\gamma_{s'}+\gamma_{s'}-1}{\gamma_{s'}}=\]
	\[=d(2g-2)+\sum_{s'\in S'}e_{s'}\frac{\gamma_{f(s')}-1}{\gamma_{f(s')}}=d(2g-2)+d\sum_{s\in S}\frac{\gamma_{s}-1}{\gamma_{s}}\]
	where the last passage follows from the equality $\sum_{f(s')=s}e_{s'}=d$.
	
\end{proof}

Consider the action by $-1$ of $\ZZ/{2}$ on $\ZZ_{2}$, and write $D_{2^{\infty}}=\ZZ_{2}\rtimes \ZZ/{2}$ for the infinite, $2$-adic dihedral group. Here is the main result of this section.

\begin{proposition}\label{proposition:subcyclic}
	Assume that $S$ is hyperbolic, and let $G\subset\hat{\pi}$ be a closed subgroup which is a $p$-group for some prime $p$.
	
	If $G$ has finite derived length, it is either pro-cyclic or isomorphic to $D_{2^{\infty}}$. Furthermore, if $G\simeq D_{2^{\infty}}$, then $S$ has at least one orbifold point of even degree.
\end{proposition}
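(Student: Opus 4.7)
The plan is to induct on the derived length $d$ of $G$, using two ingredients: finite subgroups of $\hat{\pi}$ are cyclic (every such subgroup is conjugate into a decomposition group at some orbifold point, so that an element of order $p$ in $\hat\pi$ forces $p\mid\gamma_{s}$ for some $s$), and the multiplicativity of $\chi$ on étale covers of orbicurves just proved. A closed subgroup $G\subset\hat\pi$ is studied via its images in finite $p$-power Galois quotients $\hat\pi\twoheadrightarrow Q$ and the corresponding covers $S_{Q}\to S$.

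The base case is to show every closed abelian pro-$p$ subgroup $A\subset\hat\pi$ is pro-cyclic. If $A$ were not pro-cyclic, one could realize $(\ZZ/p^{n})^{2}$ as the image of $A$ in arbitrarily deep finite $p$-power quotients $Q$ of $\hat\pi$; stabilizers of the $(\ZZ/p^{n})^{2}$-action on $S_{Q}$ are cyclic by the first paragraph, hence trivial inside $(\ZZ/p^{n})^{2}$, so $S_{Q}\to[S_{Q}/(\ZZ/p^{n})^{2}]$ is étale of degree $p^{2n}$. Multiplicativity of $\chi$, combined with the classical lower bound $|\chi|\ge 1/42$ for hyperbolic complete $2$-orbifolds (refined by the fact that orbifold degrees of the quotient divide $p^{n}$), forces a contradiction. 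Granted the abelian case, the derived-length-$2$ case follows: such a $G$ has $[G,G]$ and $G/[G,G]$ pro-cyclic, hence $G$ is metacyclic of the form $\ZZ_{p}^{a}\rtimes\ZZ_{p}^{b}$ with $a,b\in\{0,1\}$ and twist in $\aut(\ZZ_{p})=\ZZ_{p}^{\times}$. For $p$ odd, twists lie in the torsion-free group $1+p\ZZ_{p}$, and any non-trivial such twist is ruled out by the same Euler characteristic argument applied to finite $p$-power quotients of $G$. For $p=2$, $\ZZ_{2}^{\times}=\{\pm 1\}\times(1+4\ZZ_{2})$; the $1+4\ZZ_{2}$ factor is killed similarly, leaving only the action by $-1$, giving $G\cong D_{2^{\infty}}$.

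For $d\ge 3$, the penultimate derived subgroup $G^{(d-2)}$ has derived length exactly $2$ and is a closed pro-$p$ subgroup of $\hat\pi$, hence is $D_{2^{\infty}}$ by the previous step. Then $G^{(d-3)}$ is an extension of a pro-cyclic pro-$2$ group by $D_{2^{\infty}}$; since the rotation subgroup $\ZZ_{2}\subset D_{2^{\infty}}$ is characteristic (hence normal in $G^{(d-3)}$), an analysis of the action of $G^{(d-3)}/D_{2^{\infty}}$ on $\ZZ_{2}$ via $\aut(\ZZ_{2})=\ZZ_{2}^{\times}$ produces a $\ZZ_{2}\times\ZZ_{2}$ subgroup inside $G^{(d-3)}$, contradicting the abelian case, so $d\le 2$. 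The ``furthermore'' clause is immediate: if $G\cong D_{2^{\infty}}$, $G$ contains an involution which by the first paragraph is conjugate into a decomposition subgroup at an orbifold point of even degree. I expect the main obstacle to be the abelian case, and within it the torsion-free $\ZZ_{p}^{2}$-subcase, where the delicate point is matching the lower bound on $|\chi|$ of the orbicurve quotient (itself refined by the orbifold degrees appearing in the tower) against the growth of $|Q|$ to extract a genuine contradiction rather than merely a growth estimate.
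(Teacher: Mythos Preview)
Your outline has two genuine gaps, and they occur exactly where you flag the difficulty.

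\textbf{The abelian base case.} The step ``stabilizers of the $(\ZZ/p^{n})^{2}$-action on $S_{Q}$ are cyclic \emph{hence trivial inside $(\ZZ/p^{n})^{2}$}'' is false: $(\ZZ/p^{n})^{2}$ has plenty of nontrivial cyclic subgroups, so the action need not be free. More importantly, even if you pass to the stacky quotient $[S_{Q}/(\ZZ/p^{n})^{2}]$ (which \emph{is} an orbicurve, and the map to it \emph{is} étale of degree $p^{2n}$), multiplicativity of $\chi$ gives only $\chi([S_{Q}/(\ZZ/p^{n})^{2}]) = (|Q|/p^{2n})\,\chi(S)$, which is just the Euler characteristic of an intermediate cover and perfectly consistent with the $1/42$ bound. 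There is no tension between the two sides: both grow with $|Q|$. So the argument as written produces no contradiction, and you yourself note that matching the growth is the crux --- but nothing in the sketch addresses it. For curves (no orbifold points) there is a short proof via $p$-cohomological dimension of Demushkin/surface pro-$p$ groups, but, as the paper remarks in the acknowledgements, this does not seem to extend to orbicurves, which is exactly the generality needed later.

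\textbf{The derived-length-$2$ case.} You assert that $G/[G,G]$ is pro-cyclic, but $G^{\mathrm{ab}}$ is a \emph{quotient} of $G$, not a closed subgroup of $\hat\pi$, so the abelian base case does not apply to it. Indeed $D_{2^{\infty}}^{\mathrm{ab}}\simeq(\ZZ/2)^{2}$ is not pro-cyclic, so the claim is false in the very case you are trying to isolate. Consequently the metacyclic classification $G\simeq\ZZ_{p}^{a}\rtimes\ZZ_{p}^{b}$ is unjustified.

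The paper's proof takes a different route that avoids both issues. Rather than bounding $\chi$ from below, it \emph{constructs explicit finite quotients} of $\pi$: after using Lemmas~\ref{lemma:free} and~\ref{lemma:genus} to raise the genus and arrange that the image of $G^{\mathrm{ab}}$ in $\hat\pi^{\mathrm{ab}}/p$ has rank $\ge 2$, it maps $\pi$ onto the fundamental group $\pi'$ of a specific small orbicurve (the $(p,p,p)$ triangle orbifold for $p$ odd, or $(2,2,2,2)$ and $(4,4,2)$ orbifolds for $p=2$) in such a way that $G$ surjects onto the pro-$p$ completion of $\pi'/[[\pi',\pi'],[\pi',\pi']]$. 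The commutator subgroup of this target is a \emph{free} $\ZZ_{p}$-module of rank $\ge 2$, contradicting pro-cyclicity of $[G,G]$; in the residual $p=2$ case one lands on $\ZZ/2\ast\ZZ/2\simeq D_{\infty}$ and reads off $G\simeq D_{2^{\infty}}$. The point is that the constraint on $[G,G]$ is extracted from the second derived quotient of a carefully chosen \emph{quotient} of $\pi$, not from an Euler-characteristic inequality.
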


In order to prove Proposition~\ref{proposition:subcyclic}, we need some technical lemmas. 

\begin{lemma}\label{lemma:psub}
	A homomorphism $G\to H$ of pro-$p$-groups is surjective if and only if $G\to H^{\rm ab}/p$ is surjective, where $H^{\rm ab}$ is the abelianization.
\end{lemma}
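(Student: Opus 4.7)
The forward implication is immediate: if $G\to H$ is surjective, then so is the composition with the quotient map $H\to H^{\rm ab}/p$.

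For the converse, my plan is to invoke the Frattini/Burnside-basis philosophy for pro-$p$-groups. Let $K\subseteq H$ denote the closed image of $G$; the goal is to prove $K=H$. Suppose by contradiction that $K$ is a proper closed subgroup. I would then use the following standard fact about pro-$p$-groups: every proper closed subgroup of a pro-$p$-group is contained in a proper open \emph{normal} subgroup of index $p$. (This is because the Frattini subgroup of a finite $p$-group equals the intersection of its maximal subgroups, all of which are normal of index $p$; one passes to the pro-$p$ setting by a limit argument.)

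So there exists an open normal subgroup $M\triangleleft H$ of index $p$ with $K\subseteq M$. The quotient $H/M$ is a group of order $p$, in particular abelian and killed by $p$, so the projection $H\to H/M$ factors as
\[H\twoheadrightarrow H^{\rm ab}/p\twoheadrightarrow H/M.\]
Composing with $G\to H$, the image of $G$ in $H/M$ lies in the image of $K\subseteq M$, hence is trivial. But the hypothesis that $G\to H^{\rm ab}/p$ is surjective forces $G\to H/M$ to be surjective as well, contradicting $H/M\neq 0$. Therefore $K=H$.

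The only slightly nontrivial ingredient is the pro-$p$ version of the Burnside basis theorem, and this is where I would expect to cite a standard reference (e.g.\ Ribes--Zalesskii) rather than reprove it. The argument is otherwise entirely formal.
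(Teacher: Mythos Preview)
Your proof is correct and is precisely the argument the paper has in mind: the paper simply says ``This is a direct consequence of Burnside's basis theorem,'' and what you have written is exactly the standard unpacking of that statement for pro-$p$-groups. There is nothing to add.
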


\begin{proof}
	This is a direct consequence of Burnside's basis theorem.
\end{proof}

\begin{lemma}\label{lemma:free}
	Let $p$ be a prime number and
	\[\pi=\left<a_{1},b_{1},\dots,a_{g},b_{g}\mid \prod [a_{i},b_{i}]=1 \right>\]
	the fundamental group of a complete curve of genus $g\ge 2$. Let $r$ be either $1$ or $2$, and $H\subset \pi^{\rm ab}/p$ a subgroup of rank $\ge r$.

	There exists a surjective homomorphism 
	\[q:\pi\to Q=\ZZ^{\ast r}\]
	such that the composition $H\subset\pi^{\rm ab}/p\to Q^{\rm ab}/p\simeq \left(\ZZ/{p}\right)^{r}$ is surjective.
\end{lemma}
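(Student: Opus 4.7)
I would treat $r=1$ and $r=2$ separately. For $r=1$, the lemma reduces to linear algebra: since $\ZZ$ is abelian, any homomorphism $\pi\to\ZZ$ factors through $\pi^{\mathrm{ab}}\cong\ZZ^{2g}$, and is surjective onto $\ZZ$ iff the corresponding functional is primitive. Given any $0\neq\bar v\in H\subset\FF_p^{2g}$, pick an $\FF_p$-functional $\bar\phi$ with $\bar\phi(\bar v)\neq 0$ and lift it to an integer functional $\phi$; the gcd of its coordinates must be coprime to $p$ (else its mod-$p$ reduction would vanish), so dividing by this gcd yields a primitive integer functional whose mod-$p$ reduction is a nonzero scalar multiple of $\bar\phi$, and is in particular still surjective on $H$.

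For $r=2$, the plan combines two classical inputs: the Dehn-Nielsen-Baer theorem, giving a surjection $\mathrm{Out}(\pi)\twoheadrightarrow\mathrm{Sp}_{2g}(\ZZ)$ via the action on $\H_1(\pi,\ZZ)=\pi^{\mathrm{ab}}$, and strong approximation (equivalently, generation of $\mathrm{Sp}_{2g}$ by elementary symplectic transvections), giving surjectivity of the reduction $\mathrm{Sp}_{2g}(\ZZ)\twoheadrightarrow\mathrm{Sp}_{2g}(\FF_p)$. Together, every symplectic change of basis of $\pi^{\mathrm{ab}}/p$ is realized by an automorphism of $\pi$, so by Witt's theorem I can move any $2$-dimensional subspace of $\FF_p^{2g}$ to a standard form of the appropriate symplectic type.

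Using this, pick a $2$-dim subspace $H'\subset H$: if $H$ contains $v_1,v_2$ with $\langle v_1,v_2\rangle\neq 0$ under the symplectic form on $\pi^{\mathrm{ab}}/p$, take $H'=\langle v_1,v_2\rangle$ (a symplectic $2$-plane); otherwise $H$ is totally isotropic and any $H'$ is isotropic. After a symplectic change of coordinates I may assume $H'=\langle\bar a_1,\bar b_1\rangle$ or $H'=\langle\bar a_1,\bar a_2\rangle$ accordingly. In the symplectic case, the assignment $a_1\mapsto x$, $b_1\mapsto y$, $a_2\mapsto y$, $b_2\mapsto x$, all other generators $\mapsto 1$, defines a surjection $q_0:\pi\to F_2=\langle x,y\rangle$ since the relation $\prod[a_i,b_i]$ maps to $[x,y][y,x]=1$. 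In the isotropic case, $a_1\mapsto x$, $a_2\mapsto y$, all others $\mapsto 1$ works since the relation maps to $[x,1][y,1]=1$. In both cases the mod-$p$ abelianization restricts to an isomorphism $H'\xrightarrow{\sim}\FF_p^2$, hence is surjective on $H\supset H'$.

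The main obstacle is realizing a generic $\mathrm{Sp}_{2g}(\FF_p)$ change of basis as an actual automorphism of $\pi$: this is exactly the role played by Dehn-Nielsen-Baer combined with strong approximation. Once these two classical inputs are granted, the rest is a short case analysis on $H'$ and an explicit verification of the surface relation in $F_2$.
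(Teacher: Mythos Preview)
Your argument is correct, but for $r=2$ it takes a genuinely different route from the paper. The paper proceeds by an elementary case analysis directly on the coordinate matrix of two chosen vectors $(u_i),(v_i)\in H$: either some $2\times 2$ minor of rank $2$ occurs at a pair of indices $(j,j')$ that is \emph{not} a symplectic pair $(a_m,b_m)$, in which case killing all generators except $x_j,x_{j'}$ already gives a well-defined surjection $\pi\to\ZZ\ast\ZZ$; or the only rank-$2$ minor sits at a single symplectic pair $(a_m,b_m)$ and all other coordinates vanish, in which case one uses the map $a_m\mapsto x$, $b_m\mapsto y$, $a_{m+1}\mapsto y$, $b_{m+1}\mapsto x$ (precisely the map you write down in your symplectic case). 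No automorphisms of $\pi$, no symplectic group, no Witt theorem are invoked. Your approach instead normalises $H'$ to one of two model subspaces by Witt transitivity in $\mathrm{Sp}_{2g}(\FF_p)$, realised inside $\aut(\pi)$ via the surjection of the mapping class group onto $\mathrm{Sp}_{2g}(\ZZ)$ together with reduction mod $p$. (A small quibble: what you actually use is the surjectivity of the homology action $\mathrm{MCG}(S_g)\twoheadrightarrow\mathrm{Sp}_{2g}(\ZZ)$, which is more elementary than Dehn--Nielsen--Baer proper.) Your argument is more conceptual and explains \emph{why} exactly two cases suffice, at the price of importing three classical facts where the paper needs none; the paper's argument is entirely self-contained linear algebra. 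For $r=1$ the two arguments are essentially the same.
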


\begin{proof}
	For ease of notation, rename the generators $a_{1},b_{1},\dots,a_{g},b_{g}$ as $x_{1},\dots,x_{2g}$ respectively.
	
	If $r=1$ it is enough to define $\pi\to Q=\ZZ$ by mapping one suitably chosen generator $x_{i}$ to $1\in\ZZ$ and all the others to $0$.
	
	Assume $r=2$. Since $H\subset \pi^{\rm ab}/p=\FF_{p}x_{1}\oplus\dots\oplus\FF_{p}x_{2g}$ has rank $\ge 2$, we may choose two linearly independent vectors $(u_{i})_{i},(v_{i})_{i}\in H$. 
	
	Assume first that there are two indexes $1\le j<j'\le 2g$ which are \emph{not} of the form $j=2m-1$, $j'=2m$ and such that the $2\times 2$ matrix $\begin{psmallmatrix}u_{j} & v_{j} \\ u_{j'} & v_{j'}\end{psmallmatrix}$ has rank $2$. We may then define $\pi\to Q\simeq\ZZ\ast\ZZ$ by
	\[Q=\left<x_{1},\dots,x_{2g}\mid x_{i}=1\text{ if }i\neq j,j'\right>.\]
	
	If such indexes do not exist, then there exists $1\le m\le 2g$ such that $\begin{psmallmatrix}u_{2m-1} & v_{2m-1} \\ u_{2m} & v_{2m}\end{psmallmatrix}$ has rank $2$ \emph{and} for every $i\neq 2m-1,2m$ we have $u_{i}=v_{i}=0$, since $(u_{i},v_{i})$ must be a multiple of both $(u_{2m-1},v_{2m-1})$ and $(u_{2m},v_{2m})$. If $m\le g-1$, define
	\[Q=\left<x_{1},\dots,x_{2g}\mid x_{2m+1}=x_{2m},~x_{2m+2}=x_{2m-1},\right.\]
	\[\left.x_{i}=1\text{ if }i\neq 2m-1,2m,2m+1,2m+2\right>=\ZZ\ast\ZZ.\]
	The case $m=2g$ is analogous.
\end{proof}

\begin{lemma}\label{lemma:genus}
	Let $S$ be a complete hyperbolic orbicurve with fundamental group $\pi$ and $G\subset \hat{\pi}$ an infinite index closed subgroup such that $G^{\rm ab}$ has finite topological rank. For every $N$, there exists a finite index subgroup $\pi'\subset \pi$ such that $\hat{\pi}'\subset\hat{\pi}$ contains $G$ and such that the associated covering $S'\to S$ has genus larger than $N$.
\end{lemma}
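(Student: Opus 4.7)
The plan is to construct a tower of finite-index covers of $S$ containing $G$, whose genus tends to infinity. The first step is to fix a good ``torsion-free base'': by Selberg's lemma applied to $\pi$ (a Fuchsian group), there is a finite-index torsion-free subgroup, and replacing it by its core produces a finite-index \emph{normal} torsion-free subgroup $\pi_0\subset\pi$. Since $\chi(S_0)=[\pi:\pi_0]\chi(S)<0$, the cover $S_0$ is a smooth curve with $g_0\ge 2$. By residual finiteness of $\pi$, I would then build a nested sequence $\pi_0\supset\pi_1\supset\pi_2\supset\cdots$ of finite-index normal torsion-free subgroups with $\bigcap_n\hat\pi_n=\{1\}$ in $\hat\pi$; the corresponding smooth covers $S_n\to S$ satisfy $\chi(S_n)\to-\infty$, so $g_n\to\infty$.

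Next, for each $n$ I define $\hat\pi'_n:=G\cdot\hat\pi_n$, a closed (in fact open) subgroup of $\hat\pi$ containing both $G$ and $\hat\pi_n$, and set $\pi'_n:=\hat\pi'_n\cap\pi$, so $\pi_n\subset\pi'_n$ and $\hat\pi'_n\supset G$. Writing $e_n:=[\pi:\pi'_n]=[\hat\pi:G\hat\pi_n]$, the fact that $G$ is closed of infinite index combined with $\bigcap_n(G\hat\pi_n)=G$ forces $e_n\to\infty$; indeed, the inverse limit of the finite sets $\hat\pi/(G\hat\pi_n)$ is $\hat\pi/G$, which is infinite.

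The cover $S'_n$ sits in a diagram $S_n\to S'_n\to S$, and $S'_n=[S_n/\bar G_n]$ where $\bar G_n:=\pi'_n/\pi_n\cong G/(G\cap\hat\pi_n)$ acts on the smooth curve $S_n$. Using $\chi(S'_n)=e_n\chi(S)$ and the Euler formula for orbicurves,
\[
2g_{S'_n}=2+e_n|\chi(S)|-\sigma_n,\qquad \sigma_n:=\sum_{s'\in S'_n}\frac{\gamma_{s'}-1}{\gamma_{s'}}.
\]
Since $e_n\to\infty$, the whole question reduces to showing $\sigma_n=o(e_n)$. The orbifold points of $S'_n$ above an orbifold point $s_i$ of $S$ correspond to $\bar G_n$-orbits on the $\pi$-set $\pi/(\pi_n\cdot\langle c_i\rangle)$ with nontrivial stabiliser, equivalently to double cosets $\bar G_n\backslash(\pi/\pi_n)/\langle\bar c_i\rangle$ for which $\bar G_n\cap g\langle\bar c_i\rangle g^{-1}\ne 1$, and the contribution of such an orbit to $\sigma_n$ is $(|\bar G_n\cap g\langle\bar c_i\rangle g^{-1}|-1)/|\bar G_n\cap g\langle\bar c_i\rangle g^{-1}|<1$.

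The main obstacle, and the step where the hypothesis that $G^{\mathrm{ab}}$ has finite topological rank is essential, is the bound $\sigma_n=o(e_n)$. The finite-rank condition controls the image of $G$ in every finite quotient $\pi/\pi_n$: the abelianisation $\bar G_n^{\mathrm{ab}}$ has rank uniformly bounded by the rank of $G^{\mathrm{ab}}$, which restricts how frequently $\bar G_n$ can meet a conjugate of the cyclic torsion subgroup $\langle\bar c_i\rangle$, so that the proportion of ``bad'' double cosets among $\bar G_n\backslash(\pi/\pi_n)/\langle\bar c_i\rangle$ vanishes as $n\to\infty$. Once $\sigma_n=o(e_n)$ is established, $g_{S'_n}\to\infty$ and for sufficiently large $n$ one has $g_{S'_n}>N$; setting $\pi'=\pi'_n$ concludes.
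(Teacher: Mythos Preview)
Your reduction to the estimate $\sigma_n=o(e_n)$ is correct, but that estimate is the entire content of the lemma and you do not prove it. Unpacking your own description, a double coset $\bar G_n\, g\,\langle\bar c_i\rangle$ is ``bad'' precisely when some nontrivial conjugate $g\bar c_i^{\,j}g^{-1}$ lies in $\bar G_n$; thus bounding the number of bad cosets amounts to bounding, for each $i$ and each nontrivial $j$, the size of $\{g\in\pi/\pi_n : g\bar c_i^{\,j}g^{-1}\in\bar G_n\}$, which is $|C_{\pi/\pi_n}(\bar c_i^{\,j})|$ times the number of conjugates of $\bar c_i^{\,j}$ landing in $\bar G_n$. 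Nothing you wrote controls either factor. The sentence ``$\bar G_n^{\mathrm{ab}}$ has rank uniformly bounded \dots\ which restricts how frequently $\bar G_n$ can meet a conjugate of the cyclic torsion subgroup'' is not an argument: the rank of the abelianisation of $\bar G_n$ says nothing, a priori, about how many \emph{conjugates} of a given torsion element it contains (these conjugates are typically distinct even in the abelianisation only up to a very coarse equivalence), nor about the size of centralisers in $\pi/\pi_n$. For a generic cofinal tower $(\pi_n)$ there is no reason the proportion of bad cosets should tend to zero, and you give no mechanism for choosing $(\pi_n)$ adapted to $G$.

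The paper's proof proceeds quite differently and is constructive rather than asymptotic. One first enlarges $-\chi$ (passing to a finite cover containing $G$) so that, if the genus were bounded, the orbicurve would have very many orbifold points. The hypothesis on $\operatorname{rk}G^{\mathrm{ab}}$ is used in a concrete place: the image of $G^{\mathrm{ab}}$ in the torsion part of $\hat\pi^{\mathrm{ab}}$ has size at most $rm$ (with $m=\lcm\gamma_s$), and each nontrivial torsion element lies in at most $m$ of the local subgroups $H_s$, so at most $rm^2$ orbifold points are ``bad'' in the sense that their local loop meets the image of $G$. One then writes down an explicit cyclic quotient $\pi\to\ZZ/m$ vanishing on $G$ and on the bad loops but injective on many good $H_s$; Riemann--Hurwitz for the corresponding cyclic cover then forces the genus above $N$. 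The key point is that the finite-rank hypothesis is applied once, in the abelianisation of a single carefully chosen cover, to bound a finite combinatorial quantity --- not to control a limiting proportion along an arbitrary tower.
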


\begin{proof}
	Let $r$ be the rank of $G^{\rm ab}$ and $m$ the least common multiple of the degrees of the orbifold points of $S$, if we pass to étale coverings of $S$ then $m$ can only decrease.
	
	Since $G$ has infinite index and $\chi(S)<0$, up to passing to a finite index subgroup of $\pi$ we might assume that $-\chi(S)$ is arbitrarily large. If by contradiction the genus of the coverings whose fundamental group contains $G$ is bounded by $N$, we might then assume that $S$ has an arbitrarily large number of non-trivial orbifold points, say at least $2N+4rm^{2}$.
	
	The torsion subgroup of $\hat{\pi}^{\rm ab}$ is $\left(\prod_{s}\ZZ/{\gamma_{s}}\right)/(1,\dots,1)$, where $s$ runs through all the orbifold points and $\gamma_{s}$ is the degree of $s$. If $p\in S$ is an orbifold point, let $H_{p}\subset\hat{\pi}^{\rm ab}$ be the subgroup generated by a loop around $p$, i.e. the image of $\ZZ/{\gamma_{p}}\subset\prod_{s}\ZZ/{\gamma_{s}}$. Since $(1,\dots,1)\in\prod_{s}\ZZ/{\gamma_{s}}$ has order $m$, for every non-trivial torsion element $g\in \hat{\pi}^{\rm ab}$ there are at most $m$ orbifold points $s$ such that $g\in H_{s}$. Moreover, the intersection of the image of $G^{\rm ab}$ and of the torsion subgroup of $\hat{\pi}^{\rm ab}$ is bounded by $rm$. This implies that the number of orbifold points $s\in S$ such that $H_{s}$ intersects non-trivially the image of $G^{\rm ab}\to\hat{\pi}^{\rm ab}$ is bounded by $rm^{2}$. We call these bad points, while the others are good points.
	
	Choose $n\le m$ good points $s_{1},\dots,s_{n}$ such that $\lcm(\gamma_{s_{1}},\dots,\gamma_{s_{n}})=\lcm_{s\text{ good}}(\gamma_{s})$. Consider the standard presentation $\pi=\left<a_{i},b_{i},x_{s}\mid x_{s}^{\gamma_{s}}=1, \prod_{i}[a_{i},b_{i}]\cdot\prod_{s}x_{s}=1\right>$. Define a homomorphism $f:\pi\to \ZZ/{m}$ as follows. All the generators $a_{i},b_{i}$ map to $0$. If $s$ is a bad point, $x_{s}$ maps to $0$. If $s$ is a good point different from $s_{1},\dots,s_{n}$, $x_{s}$ maps to $m/\gamma_{s}$. Finally, thanks to how we have chosen $s_{1},\dots,s_{n}$, we may define $f(x_{s_{i}})$ so as to ensure that the relation $\sum_{s}f(x_{s})=0\in \ZZ/{m}$ is satisfied. Let $\ZZ/{m'}\subset \ZZ/{m}$ be the image of $f$.
	
	By construction, there are at least $2N+4rm^{2}-rm^{2}-n\ge 2N+2rm^{2}$ non-trivial orbifold points $s$ such that the restriction of $f$ to $H_{s}$ is injective. If $H_{s}\to \ZZ/{m'}$ is injective, there exists a unique point $s'\in S'$ over $s$ and the ramification index at $s'$ of the corresponding map of coarse moduli spaces is $|H_{s}|-1\ge 1$. By Riemann-Hurwitz,
	\[g_{S'}> m'(g_{S}-1)+N+rm^{2}\ge N+rm^{2}-m'\ge N.\]
\end{proof}

Clearly $D_{2^{\infty}}$ is the $2$-adic completion of the infinite dihedral group $D_{\infty}=\ZZ\rtimes \ZZ/{2}$, and there is a section $D_{2^{\infty}}\subset \widehat{D_{\infty}}=\hat{\ZZ}\rtimes \ZZ/{2}$ of $\widehat{D_{\infty}}\to D_{2^{\infty}}$. Moreover, $D_{2^{\infty}}^{\rm ab}\simeq \left(\ZZ/{2}\right)^{2}$ and $[D_{2^{\infty}},D_{2^{\infty}}]\simeq\ZZ_{2}$.

The following fact, along with its proof, was kindly told to us by J. Stix.

\begin{lemma}\label{lemma:dihedral}
	The free product $\ZZ/{2}\ast\ZZ/{2}$ is isomorphic to the infinite dihedral group $D_{\infty}$.
\end{lemma}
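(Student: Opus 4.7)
The plan is to exhibit explicit mutually inverse homomorphisms between the two presentations $\ZZ/2\ast\ZZ/2=\langle a,b\mid a^{2}=b^{2}=1\rangle$ and $D_{\infty}=\langle t,s\mid s^{2}=1,\ sts^{-1}=t^{-1}\rangle$, where the second presentation encodes $D_{\infty}=\ZZ\rtimes\ZZ/2$ with the inversion action.

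First, define $\psi:D_{\infty}\to\ZZ/2\ast\ZZ/2$ by $t\mapsto ab$ and $s\mapsto b$. By the universal property of semidirect products (or of the presentation with relations $s^{2}$ and $sts^{-1}t$), this is well-defined as soon as $b^{2}=1$ and $b(ab)b^{-1}=(ab)^{-1}$ hold in $\ZZ/2\ast\ZZ/2$; the first is the defining relation, and the second reduces via $b^{-1}=b$ to $bab=(ab)^{-1}=b^{-1}a^{-1}=ba$, which is immediate.

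Next, define $\phi:\ZZ/2\ast\ZZ/2\to D_{\infty}$ by $a\mapsto ts$ and $b\mapsto s$. By the universal property of the free product, this is well-defined once we verify $(ts)^{2}=1$ and $s^{2}=1$ in $D_{\infty}$. The latter is a defining relation; for the former, from $sts^{-1}=t^{-1}$ we get $st=t^{-1}s$, so $(ts)^{2}=t(st)s=t\cdot t^{-1}s\cdot s=s^{2}=1$.

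Finally, compose on generators: $\phi\circ\psi(t)=\phi(ab)=ts\cdot s=t$ and $\phi\circ\psi(s)=\phi(b)=s$, while $\psi\circ\phi(a)=\psi(ts)=ab\cdot b=a$ and $\psi\circ\phi(b)=\psi(s)=b$. Since these agree with the identity on sets of generators, both compositions are the identity, and $\phi,\psi$ are inverse isomorphisms. There is no real obstacle here; the only point worth care is to set up the relations in the correct order so that the two ``universal property'' checks reduce to $bab=ba$ and $(ts)^{2}=1$, both of which follow at once from the defining relations.
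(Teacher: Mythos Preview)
Your proof is correct and coincides with the paper's own alternative argument: the paper exhibits the same pair of mutually inverse homomorphisms (with $r$ in place of your $t$, and the presentation $\langle r,s\mid s^{2}=(rs)^{2}=1\rangle$ in place of your semidirect-product presentation), and in addition mentions a geometric proof via the action of $D_{\infty}$ on $\RR$ and Serre's theorem on trees. One cosmetic slip: in your check for $\psi$, the left-hand side $b(ab)b^{-1}$ simplifies directly to $ba$ (the trailing $b\cdot b^{-1}$ cancels), not to $bab$; the verification is then just $ba=(ab)^{-1}=b^{-1}a^{-1}=ba$.
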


\begin{proof}
	Write $D_{\infty}=\left<r,s\mid s^{2}=(rs)^{2}=1\right>$. Consider the action of $D_{\infty}$ on $\RR$ where $r$ acts as $x\mapsto x+2$ and $s$ acts as $x\mapsto -x$, the statement then follows from \cite[\S I.4, Theorem 6]{serre-trees}. Alternatively, if $\ZZ/{2}\ast\ZZ/{2}=\left<a,b\mid a^{2}=b^{2}=1\right>$, the homomorphisms $a\mapsto s$, $b\mapsto rs$ and $r\mapsto ba$, $s\mapsto a$ are inverses.
\end{proof}

It follows that $D_{2^{\infty}}$ is the $2$-adic completion of the fundamental group $\ZZ/{2}\ast\ZZ/{2}$ of the disk with two orbifold points of order $2$.

\begin{proof}[Proof of Proposition~\ref{proposition:subcyclic}]
	If $D_{2^{\infty}}\subset\hat{\pi}$, then up to passing to an étale cover of $S$ we may assume that $\hat{\pi}^{\rm ab}\supset \left(\ZZ/{2}\right)^{2}=D_{2^{\infty}}^{\rm ab}$ has non-trivial torsion, hence $S$ has non-trivial orbifold points.
	
	Let us now prove the main statement. We break down the proof in two steps. In the first step, we prove it under the assumption that $[G,G]$ is pro-cyclic. In the second step, we prove that $[G,G]$ cannot be isomorphic to $D_{2^{\infty}}$.
	
	By induction on the derived length, these two facts are sufficient. In fact, if the derived length of $G$ is $1$ then $G$ is abelian and $[G,G]$ is trivial, so $G$ is pro-cyclic by the first step. If the derived length is $\ge 2$, then $[G,G]$ is pro-cyclic thanks to the induction hypothesis and the second step; it follows that $G$ has derived length $2$ and it is isomorphic to $D_{2^{\infty}}$ by the first step.

	{\bf Step 1.} Assume that $[G,G]$ is pro-cyclic, we want to show that $G$ is either pro-cyclic or isomorphic to $D_{2^{\infty}}$. If $G^{\rm ab}$ has $p$-rank $1$, then $G$ is pro-cyclic by Lemma~\ref{lemma:psub}. Assume that $G^{\rm ab}$ has $p$-rank $\ge 2$, we want to show that $G\simeq D_{2^{\infty}}$.
	
	It is enough to prove this in the case in which $G^{\rm ab}$ has finite $p$-rank: if $G$ has infinite $p$-rank, since $[G,G]$ has finite $p$-rank we may choose a closed subgroup $G'\subset G$ such that $G'^{\rm ab}$ has finite $p$-rank larger than $3>2=\operatorname{rk} D_{2^{\infty}}^{\rm ab}$, giving a contradiction. Hence, we may assume that $G^{\rm ab}$ has finite $p$-rank $\ge 2$. Up to passing to a finite index subgroup of $\pi$ we might also assume that the image of $G$ in $\hat{\pi}^{\rm ab}/p$ has rank equal to $\operatorname{rk}G^{\rm ab}\ge 2$.

	Up to passing to another finite index subgroup of $\pi$, thanks to Lemma~\ref{lemma:genus} we may assume that the genus $g$ of $S$ is $\ge 2$. Consider the usual presentation
	\[\pi=\left<a_{1},b_{1},\dots,a_{g},b_{g},c_{1},\dots,c_{n}\mid c_{i}^{\gamma_{i}}=1,~\prod [a_{i},b_{i}]\cdot\prod c_{i}=1 \right>.\]
	
	{\bf Case $p\neq 2$.} Let us check that there exists a quotient $\pi\to\pi'$ of the form
	\[\pi'=\left<x_{1},x_{2},x_{3}\mid x_{1}^{p}=x_{2}^{p}=x_{3}^{p}=x_{1}x_{2}x_{3}=1\right>\]
	such that $G\to\pi'^{\rm ab}/p\simeq \left(\ZZ/{p}\right)^{2}$ is surjective.
	
	We have quotients
	\[\tau=\left<a_{1},b_{1},\dots,a_{g},b_{g}\mid\prod [a_{i},b_{i}]=1 \right>,\]
	\[\tau'=\left<c_{1},\dots,c_{n}\mid c_{i}^{\gamma_{i}}=1,~\prod c_{i}=1 \right>\]
	of $\pi$, and an induced surjective map $\pi\to\tau\ast\tau'$ whose abelianization is an isomorphism. If the image of $G$ in $\tau^{\rm ab}/p$ has rank $\ge 2$, we may define the desired quotient $\tau\to\pi'$ using Lemma~\ref{lemma:free}, since $\pi'$ is a quotient of $\ZZ\ast \ZZ$. If the image of $G$ in $\tau'^{\rm ab}/p$ has rank $\ge 2$, then we may define the desired quotient $\tau'\to\pi'$ by mapping $3$ suitably chosen generators of $\tau'$ to $x_{1},x_{2},x_{3}$, and the others to the identity. The only remaining case is the one in which the images of $G$ in $\tau^{\rm ab}/p$ and $\tau'^{\rm ab}/p$ have both rank $1$. Analogously to the above, we may find quotients $\tau\to\ZZ/{p}$, $\tau'\to\ZZ/{p}$ such that the composition $G\to\tau\ast\tau'\to\ZZ/p\ast\ZZ/p\to \left(\ZZ/{p}\right)^{2}$ is surjective, and conclude since $\pi'$ is a quotient of $\ZZ/{p}\ast \ZZ/{p}$.

	The group $\pi'$ is the fundamental group of a complete orbicurve $S'$ of genus $0$ with three orbifold points of degree $p$. The $\left(\ZZ/{p}\right)^{2}$ Galois covering $S''\to S'$ associated with the abelianization morphism $\pi'\to \left(\ZZ/{p}\right)^{2}$ defines a surface $S''$ of genus $g''$. Because of this, the pro-$p$ completion $Q$ of $\pi'/[[\pi',\pi'],[\pi',\pi']]$ is a pro-$p$-group with $Q^{\rm ab}\simeq \left(\ZZ/{p}\right)^{2}$ and $[Q,Q]=\ZZ_{p}^{2g''}$. By Lemma~\ref{lemma:psub}, $G\to Q$ is surjective. Since $p\ge 3$ then $\chi(S'')=p^{2}\chi(S')\le 0$, hence $g''\ge 1$ and $[Q,Q]$ has rank $\ge 2$, which is in contradiction with the fact that $[G,G]$ is pro-cyclic.
	
	{\bf Case $p=2$, $G^{\rm ab}\not\simeq \left(\ZZ/{2}\right)^{2}$.} If $G^{\rm ab}\simeq \left(\ZZ/{2}\right)^{m}$ for some $m\ge 3$, with an argument analogous to the above we may find a quotient $\pi'$ of $\pi$ of the form
	\[\left<x_{1},x_{2},x_{3},x_{4}\mid \forall i:x_{i}^{2}=1,~x_{1}x_{2}x_{3}x_{4}=1\right>\]
	and such that $G\to\hat{\pi}'^{\rm ab}$ is surjective. Otherwise, using another analogous argument we might choose $\pi'$ of the form
	\[\left<x_{1},x_{2},x_{3}\mid x_{1}^{4}=x_{2}^{4}=x_{3}^{2}=x_{1}x_{2}x_{3}=1\right>.\]
	In both cases, $\pi'$ is the fundamental group of an orbicurve $S'$ with $\chi(S')= 0$. In both cases, the abelianization of $\pi'$ is a finite $2$-group and defines a Galois cover which is a surface, hence we might get a contradiction as in the case $p\ge 3$.
	
	{\bf Case $p=2$, $G^{\rm ab}\simeq \left(\ZZ/{2}\right)^{2}$.} Up to passing to a finite index subgroup of $\pi$ we might assume that $G^{\rm ab}\subset\hat{\pi}^{\rm ab}/2$. In particular, $S$ has at least $3$ orbifold points of even degree. Since the genus is positive and $G^{\rm ab}$ is torsion, the pullback of any abelian, non-trivial étale cover of the coarse moduli space of $S$ defines an étale cover of $S$ whose étale fundamental group contains $G$ and which has at least $6$ orbifold points of even order. Hence, we may reduce to the case in which $S$ has at least $4$ orbifold points of even degree.
	
	If $S$ has at least $4$ orbifold points of even degree and since $G^{\rm ab}\subset \hat{\pi}^{\rm ab}/2$, there exists a quotient $\pi'$ of $\pi$ of the form
	\[\left<x_{1},x_{2},x_{3},x_{4}\mid \forall i:x_{i}^{2}=1,~x_{1}x_{2}x_{3}x_{4}=1\right>\]
	such that the image of $G$ in $\hat{\pi}^{\rm ab}=\pi^{\rm ab}=\left(\ZZ/{2}\right)^{4}/(1,1,1,1)\simeq \left(\ZZ/{2}\right)^{3}$ is isomorphic to $\left(\ZZ/{2}\right)^{2}$. The group $\pi'$ is the fundamental group of an orbicurve $S'$ of genus $0$ with $4$ orbifold points of degree $2$.
	
	Consider the homomorphism $\pi'\to \ZZ/{2}$ which maps $x_{i}$ to $1$ for every $i$, the associated covering $S''\to S'$ is a torus with fundamental group $\pi''=\ZZ^{2}\subset \pi'$. If the image of $G$ is contained in $\hat{\pi}''\subset\hat{\pi}'$, then $G^{\rm ab}\simeq \left(\ZZ/{2}\right)^{2}$ embeds into $\hat{\pi}''=\hat{\ZZ}^{2}$, which is absurd since the latter is torsion free. Because of this and by rank reasons, there exists an element in the kernel of $\left(\ZZ/{2}\right)^{4}/(1,1,1,1)\to \ZZ/{2}$ which is not in the image of $G$, say $(1,1,0,0)$ for simplicity.
	
	Consider the homomorphism $\pi'\to \ZZ/{2}\ast \ZZ/{2}$ which maps $x_{1},x_{2}$ to the first generator and $x_{3},x_{4}$ to the second generator. The kernel of the induced map $\left(\ZZ/{2}\right)^{4}/(1,1,1,1)\to \left(\ZZ/{2}\right)^{2}$ between abelianizations is generated by $(1,1,0,0)$, which is not contained in the image of $G$. Because of this, the composition $G^{\rm ab}=\left(\ZZ/{2}\right)^{2}\hookrightarrow \left(\ZZ/{2}\right)^{4}/(1,1,1,1)\to \left(\ZZ/{2}\right)^{2}$ is bijective, which in turn implies that the composition $G\to\widehat{\ZZ/{2}\ast \ZZ/{2}}\to D_{2^{\infty}}$ is surjective. Since $[G,G]$ is a pro-cyclic pro-$2$-group, the induced surjective homomorphism $[G,G]\to [D_{2^{\infty}},D_{2^{\infty}}]\simeq\ZZ_{2}$ must be injective as well; this implies that $G\to D_{2^{\infty}}$ is injective and hence an isomorphism.
	
	{\bf Step 2.} Finally, we have to check that $[G,G]$ cannot be isomorphic to $D_{2^{\infty}}$. Clearly, we may assume that $p=2$ and that $G$ is not pro-cyclic; in particular, $G^{\rm ab}$ is not cyclic. The construction of the preceeding case gives us a surjective homomorphism $G\to D_{2^{\infty}}$, which induces a surjective homomorphism $[G,G]\to[D_{2^{\infty}},D_{2^{\infty}}]\simeq\ZZ_{2}$. We conclude that $[G,G]\not\simeq D_{2^{\infty}}$ since $D_{2^{\infty}}^{\rm ab}\simeq \left(\ZZ/{2}\right)^{2}$.
\end{proof}

\begin{example}
	The group $D_{2^{\infty}}$ can actually appear as a subgroup of the profinite completion of the fundamental group of an hyperbolic orbicurve. Let $M$ be the unit disk, and consider a complete curve $S$ of arbitrary genus with a \emph{continuous} (not holomorphic) map to the disk $S\to M$ such that for some open subdisk $U\subset M$ the restriction $S_{U}\to U$ is a disjoint union of copies of $U$. Define $M_{2,2}\to M$ as an orbifold whose coarse moduli space is the disk $M$ and which has two orbifold points of order $2$ over points of $U\subset M$, and let $U_{2,2}\subset M_{2,2}$ be the inverse image of $U$. 
	
	Define $S'$ as the fibered product $S\times_{M}M_{2,2}$, then $S'$ is an orbisurface whose coarse moduli space is $S$ with maps $S'\to M_{2,2}$, $U_{2,2}\to S'$ such that the composition $U_{2,2}\to M_{2,2}$ is the given embedding $U_{2,2}\subset M_{2,2}$ and thus defines an isomorphism $\pi_{1}(U_{2,2})\simeq \pi_{1}(M_{2,2})$. Since $D_{2^{\infty}}\subset\widehat{D_{\infty}}=\widehat{\pi_{1}(M_{2,2})}$, then $\widehat{\pi_{1}(S')}$ has a closed subgroup isomorphic to $D_{2^{\infty}}$.
\end{example}

\section{Proof of Theorem~\ref{theorem:index}}\label{section:index}

Let us now prove Theorem~\ref{theorem:index}. Denote by $S$ the quotient stack $[C/G]$. Up to passing to a finite extension of $k$, we might assume that $S(k)$ is non-empty.

\subsection{Definition of $\fC$} Recall that, by assumption, all curves are smooth, projective and geometrically connected. Let $P=\upic^{1}_{C}$ be the Picard scheme of line bundles of degree $1$ on $C$, the action of $G$ on $C$ induces an action on $P$.

Denote by $X$ the quotient stack $[P/G]$. Since the action on $C\subset P$ is faithful then the action on $P$ is generically free, hence $X$ is generically a scheme. Let $K=k(X)$ be its function field, then $k(P)/K$ is a Galois extension with Galois group $G$ and $\spec k(P)\to\spec K$ is a $G$-torsor associated with a morphism $\spec K\to\cB G$. As in \S\ref{sect:obstruction}, define $\fC$ as the twist of $C_{K}$ by this torsor, namely
\[\fC\eqdef C_{K}\times^{G}\spec k(P)=(C_{K}\times_{K}\spec k(P))/G.\]

\subsection{The index of $\fC$}

We want to show that $\fC$ has index $1$. By construction, $\upic^{1}_{\fC}=P_{K}\times^{G}\spec k(P)$ has a $K$-rational point and hence $\fC$ has period $1$. Let $I$ be the index of $\fC$. By Lichtenbaum's theorem \cite[Theorem 8]{lichtenbaum}, we have either $I=1$ or $I=2$, and if $I=2$ then $g_{C}-1$ is odd, where $g_{C}$ is the genus of $C$. Since $S(k)\neq\emptyset$, the index divides $|G|$, hence if $p\neq 2$ then $I=1$.

Assume $p=2$, it is sufficient to show that $g_{C}-1$ is even. If by contradiction $g_{C}-1$ is odd, by the assumption in Theorem~\ref{theorem:index} either the action is free or $G$ has no cyclic subgroups of index $2$. In both cases $4$ divides both $|G|$ and the cardinality of each orbit (since geometric stabilizers are cyclic), hence by Riemann-Hurwitz $4$ divides $2g_{C}-2$ as well, which is absurd.

\subsection{Galois sections of $\fC$}

It remains to prove that $\Pi_{\fC/K}(K)=\emptyset$. By Proposition~\ref{proposition:obstruction}, it is enough to show that $T=\spec k(P)\to\spec K$ does not lift to a $\underline{\pi}_{1}(S,p)$-torsor, where $p\in S(k)$ is some rational point. Equivalently, we want to show that $\spec K\to\cB G$ does not lift to $\Pi_{S/k}=\cB\underline{\pi}_{1}(S,p)$. 

Assume by contradiction that there exists a section $\spec K\to\Pi_{S/k}$ which lifts $\spec K\to\cB G$. We would like to show that $\spec K\to\Pi_{S/k}$ extends to a morphism $X\to\Pi_{S/k}$; since $X=[P/G]$ is smooth, it is well-known how to do this using a weight argument. References are only available for schemes, though, while $X$ and $S$ are stacks; it is possible to sidestep this problem by working with $P$ and $C$ instead of $X$ and $S$.

We have a $2$-cartesian diagram
\[\begin{tikzcd}
	\spec k(P)\rar\dar		&	\Pi_{C/k}\rar\dar	&	\spec k\dar	\\
	\spec K\rar				&	\Pi_{S/k}\rar		&	\cB G.
\end{tikzcd}\]

By a well-known weight argument (see e.g. \cite[Corollary A.11]{giulio-proed}, \cite[Lemma 3.3]{giulio-scfg}) $\spec k(P)\to\Pi_{C/k}$ extends to a morphism $P\to\Pi_{C/k}$. By descent theory, this induces a morphism $X\to\Pi_{S/k}$. It is possible to avoid using descent theory as follows.

Let $R\subset\gal\left(\bar{K}/k(P)\right)$ be the kernel of $\gal\left(\bar{K}/k(P)\right)\to\pi_{1}(P)$. We have a commutative diagram of short exact sequences
\[\begin{tikzcd}
	1\rar	&	\gal\left(\bar{K}/k(P)\right)\rar\dar	&	\gal\left(\bar{K}/k(X)\right)\rar\dar	&	G\rar\ar[d,equal]	&	1	\\
	1\rar	&	\pi_{1}(P)\rar							&	\pi_{1}(X)\rar							&	G\rar				&	1,
\end{tikzcd}\]
hence $\pi_{1}(X)=\gal\left(\bar{K}/k(X)\right)/R$. Since $\spec k(P)\to\Pi_{S/k}$ extends to $P$, the associated homomorphism $\gal\left(\bar{K}/k(P)\right)\to\pi_{1}(S)$ maps $R$ to the identity and hence we get a factorization
\[\gal\left(\bar{K}/k(X)\right)\to\pi_{1}(X)\to\pi_{1}(S)\]
of the homomorphism $\gal\left(\bar{K}/k(X)\right)\to\pi_{1}(S)$ associated with the section $\spec K\to\Pi_{S/k}$. 

Since $\spec K\to\Pi_{S/k}$ lifts $\spec K\to\cB G$, the composition
\[\pi_{1}(X_{\bar{k}})\to\pi_{1}(S_{\bar{k}})\to G\]
is a factorization of the surjective homomorphism $\pi_{1}(X_{\bar{k}})\to G$ induced by the $G$-covering $P\to X$.

Since $\pi_{1}(P_{\bar{k}})$ is abelian and $G$ is a finite $p$-group, the image $H\subset \pi_{1}(S_{\bar{k}})$ of a $p$-Sylow of $\pi_{1}(X_{\bar{k}})$ is an extension of $G$ by an abelian pro-$p$-group; in particular $H$ has finite derived length. By Proposition~\ref{proposition:subcyclic}, such a subgroup of $\pi_{1}(S_{\bar{k}})$ is either pro-cyclic or isomorphic to $D_{2^{\infty}}$. If $H$ is pro-cyclic, then $G$ is cyclic, giving a contradiction. If $H\simeq D_{2^{\infty}}$, then $p=2$ and $G$ is dihedral. By hypothesis this implies that the action is free, which is contradiction with Proposition~\ref{proposition:subcyclic}.

\begin{remark}
	If the section conjecture holds for $C_{k(P)}$ and the action is free, we only need to assume that $G$ is non-trivial to obtain that the curve $\fC$ constructed above satisfies $\Pi_{\fC/K}(K)=\emptyset$. In fact, if $\spec K\to\cB G$ lifts to $\Pi_{S/k}$, the induced morphism $\spec k(P)\to \Pi_{C/k}$ is associated with a point $\spec k(P)\to C$ which factorizes through $\spec k$ since $C$ is hyperbolic and $P$ is an abelian variety. With notation as above, this implies that $H\simeq G$ is finite, which is absurd since $\pi_{1}(S_{\bar{k}})$ is torsion free.
	
	On the other hand, this is false if the action is not free and $G$ is cyclic. For instance, if $G$ is cyclic and fixes a rational point $c\in C(k)$, then $c$ induces a $K$-rational point of $\fC$.
\end{remark}

\section{Proof of Theorem~\ref{theorem:cover}}

Consider the action $\phi$ of $\ZZ/{\ell}$ on $\ZZ/{\ell^{2}}$ given by $1\mapsto 1+\ell$, there exists a surjective homomorphism $\ZZ/{\ell^{2}}\rtimes_{\phi} \ZZ/{\ell}\to \left(\ZZ/{\ell}\right)^{2}$. 

\begin{lemma}\label{lemma:semi}
	Let $k,\ell$ be as in Theorem~\ref{theorem:cover}. There exists a finite extension $K/k$ with a surjective homomorphism $\gal\left(\bar{K}/K\right)\to \left(\ZZ/{\ell}\right)^{2}$ which does not lift to $\ZZ/{\ell^{2}}\rtimes_{\phi} \ZZ/{\ell}$. 
\end{lemma}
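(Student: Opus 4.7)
The plan is to phrase the lifting problem as a Galois-cohomology obstruction and then reduce it to the non-vanishing of a cup product. A direct computation on $E=\ZZ/\ell^{2}\rtimes_{\phi}\ZZ/\ell$ shows that its centre is $\ell\ZZ/\ell^{2}\times\{0\}\cong\ZZ/\ell$, so quotienting by the centre yields a central extension
\[1\to\ZZ/\ell\to E\to(\ZZ/\ell)^{2}\to 1\]
with class $\varepsilon\in H^{2}((\ZZ/\ell)^{2},\ZZ/\ell)$. A homomorphism $\rho\colon\gal(\bar K/K)\to(\ZZ/\ell)^{2}$ lifts to $E$ if and only if $\rho^{*}\varepsilon=0$ in $H^{2}(K,\ZZ/\ell)$, so the goal becomes: find $K/k$ finite and $\rho$ surjective with $\rho^{*}\varepsilon\neq 0$. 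The commutator pairing of $E$ evaluates to the non-degenerate symplectic form $((\bar a,b),(\bar a',b'))\mapsto a'b-ab'$, so in the Künneth decomposition of $H^{2}((\ZZ/\ell)^{2},\ZZ/\ell)$ the cup product coefficient of $\varepsilon$ is non-zero; the remaining summands are Bockstein contributions.

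After replacing $k$ by the finite extension $k(\mu_{\ell^{2}})$, the Galois modules $\ZZ/\ell^{2}$ and $\mu_{\ell^{2}}$ become canonically isomorphic, so Kummer theory (the surjectivity of $K^{*}/(K^{*})^{\ell^{2}}\to K^{*}/(K^{*})^{\ell}$) kills the Bockstein $H^{1}(K,\ZZ/\ell)\to H^{2}(K,\ZZ/\ell)$ entirely. Over any such $K$ the obstruction then reduces to a non-zero scalar multiple of $\rho_{1}\cup\rho_{2}\in H^{2}(K,\ZZ/\ell)$, where $\rho=(\rho_{1},\rho_{2})$, and the problem becomes to produce a finite extension $K/k(\mu_{\ell^{2}})$ and linearly independent classes $\rho_{1},\rho_{2}\in H^{1}(K,\ZZ/\ell)$ with $\rho_{1}\cup\rho_{2}\neq 0$.

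For the local case ($k/\QQ_{p}$ finite with $p\neq\ell$) I take $K=k(\mu_{\ell^{2}})$: Kummer theory gives $\dim_{\FF_{\ell}}H^{1}(K,\ZZ/\ell)=2$, local Tate duality makes the cup product pairing $H^{1}\times H^{1}\to H^{2}(K,\ZZ/\ell)=\ZZ/\ell$ perfect, and the Kummer classes of a uniformizer and of a non-$\ell$-th-power unit form a basis whose cup product is the non-trivial tame symbol. For the global case ($k$ finitely generated over $\QQ$) I pick a rank-one discrete valuation $v$ on $k(\mu_{\ell^{2}})$ whose completion $K_{v}$ is finite over some $\QQ_{p}$ with $p\neq\ell$ (available from any closed point of residue characteristic $\neq\ell$ of a finitely generated $\ZZ$-model), build the classes over $K_{v}$ as in the local case, and lift them to $K=k(\mu_{\ell^{2}})$ via the density-induced surjection $K^{*}/(K^{*})^{\ell}\to K_{v}^{*}/(K_{v}^{*})^{\ell}$; the non-vanishing of the cup product and linear independence are both inherited under restriction to $K_{v}$. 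The main obstacle I expect is the cohomological bookkeeping for $\varepsilon$—especially checking that the Bockstein components (which for $\ell=2$ coincide with $\chi\mapsto\chi\cup\chi$) vanish once $\mu_{\ell^{2}}$ is adjoined—after which the local construction is standard local class field theory and the global case is handled by the same density argument.
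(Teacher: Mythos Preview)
Your argument is correct and takes a genuinely different route from the paper's. The paper proceeds structurally: in the local case, after adjoining $\mu_{\ell^{2}}$ the maximal pro-$\ell$ quotient of $\gal(\bar k/k)$ is $\ZZ_{\ell}\rtimes_{\rho}\ZZ_{\ell}$ with the cyclotomic action, and since $\rho$ is trivial modulo $\ell^{2}$ every finite quotient of exponent $\ell^{2}$ is abelian, hence cannot surject onto the non-abelian $E$; Burnside's basis theorem (Lemma~\ref{lemma:psub}) then rules out any lifting. For the global case the paper builds, by iterated specialisation, a section from the Galois group of a number field into $\gal(\bar k/k)$, embeds a local decomposition group as a closed subgroup, and extends the local homomorphism to an open subgroup. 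Your approach instead identifies the lifting obstruction as $\rho^{*}\varepsilon\in H^{2}(K,\ZZ/\ell)$, decomposes $\varepsilon$ into a cup-product term plus Bocksteins, kills the Bocksteins by adjoining $\mu_{\ell^{2}}$, and then appeals to local Tate duality. This is more conceptual and visibly generalises to any central extension of $(\ZZ/\ell)^{2}$ with non-trivial commutator pairing; the paper's argument, on the other hand, yields the slightly stronger statement that over the chosen local field \emph{every} surjection to $(\ZZ/\ell)^{2}$ fails to lift.

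One point in your global reduction deserves tightening. The parenthetical ``available from any closed point of residue characteristic $\neq\ell$ of a finitely generated $\ZZ$-model'' does not literally produce a rank-one valuation with $p$-adic completion when $k$ has positive transcendence degree: the local ring at such a closed point is higher-dimensional, not a DVR. What you actually need (and what makes your density argument work) is an embedding of $K=k(\mu_{\ell^{2}})$ into a finite extension $K_{v}/\QQ_{p}$ for some $p\neq\ell$, with dense image. This exists because $\overline{\QQ_{p}}$ has infinite transcendence degree over $\QQ$: send a transcendence basis of $K/\QQ$ to algebraically independent elements of $\QQ_{p}$, take $K_{v}=K\cdot\QQ_{p}$ inside $\overline{\QQ_{p}}$ (finite over $\QQ_{p}$ since $K$ is finitely generated), and observe that $\QQ\subset K$ is already dense in $\QQ_{p}$, hence $K$ is dense in $K_{v}$. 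With that adjustment the surjection $K^{*}/(K^{*})^{\ell}\twoheadrightarrow K_{v}^{*}/(K_{v}^{*})^{\ell}$ and the remainder of your argument go through unchanged.
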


\begin{proof} 
	Up to a finite extension, we may assume that $k$ contains a primitive $\ell^{2}$-th root of $1$.
	
	Assume first that $k$ is a finite extension of $\QQ_{p}$, $p\neq\ell$. Since $p\neq\ell$, the maximal $\ell$-adic quotient of $\gal(\bar{k}/k)$ is $\ZZ_{\ell}\rtimes_{\rho}\ZZ_{\ell}$, where $\ZZ_{\ell}$ acts on itself with the cyclotomic character $\rho$. Since $k$ contains an $\ell^{2}$-th root of $1$, the action of $\ZZ_{\ell}$ on the quotient $\ZZ/{\ell^{2}}$ of $\ZZ_{\ell}$ is trivial, hence there exists a surjective homomorphism $\ZZ_{\ell}\rtimes_{\rho}\ZZ_{\ell}\to \left(\ZZ/{\ell^{2}}\right)^{2}$; furthermore, every finite quotient of $\ZZ_{\ell}\rtimes_{\rho}\ZZ_{\ell}$ of exponent $\ell^{2}$ is a quotient of $\left(\ZZ/{\ell^{2}}\right)^{2}$.
	
	It follows that there are no surjective homomorphisms $\ZZ_{\ell}\rtimes_{\rho}\ZZ_{\ell}\to \ZZ/{\ell^{2}}\rtimes_{\phi} \ZZ/{\ell}$. By Lemma~\ref{lemma:psub}, this implies that there are no liftings $\ZZ_{\ell}\rtimes_{\rho}\ZZ_{\ell}\to \ZZ/{\ell^{2}}\rtimes_{\phi} \ZZ/{\ell}$ of the composition $\ZZ_{\ell}\rtimes_{\rho}\ZZ_{\ell}\to \left(\ZZ/{\ell^{2}}\right)^{2}\to \left(\ZZ/{\ell}\right)^{2}$. This concludes the proof for local fields.
	
	If $k$ is finitely generated over $\QQ$, let $h\subset k$ be the algebraic closure of $\QQ$ in $k$. I claim that there exists a finite extension $h'/h$ with a section $\gal(\bar{h}/h')\to\gal(\bar{k}/k)$. By induction, it is enough to do the case in which $k$ has transcendence degree $1$ over $h$. Let $V$ be a smooth curve over $h$ whose fraction field is $k$, and $v\in V$ a closed point with residue field $h'$. Let $k_{v}$ be the completion of $k$ at the place defined by $v$, the residue field of $k_{v}$ is $h'$ and hence we get the desired splitting $\gal(\bar{h}/h')\to\gal(\bar{k}_{v}/k_{v})\to\gal(\bar{k}/k)$. Up to replacing $h$ with $h'$ and $k$ with $h'\otimes_{h}k$, we might then assume that $\gal(\bar{k}/k)\to\gal(\bar{h}/h)$ is split. 
	
	Let $h_{\nu}$ be the completion of $h$ at a prime $\nu$ dividing $p\neq \ell$. Since $\gal(\bar{k}/k)\to\gal(\bar{h}/h)$ is split and $\gal(\bar{h}_{\nu}/h_{\nu})\to\gal(\bar{h}/h)$ is injective, then $\gal(\bar{k}/k)$ has a closed subgroup isomorphic to $\gal(\bar{h}_{\nu}/h_{\nu})$. Let $\gal(\bar{h}_{\nu}/h_{\nu})\to \left(\ZZ/{\ell}\right)^{2}$ be the only surjective homomorphism to $\left(\ZZ/{\ell}\right)^{2}$, since $\gal(\bar{h}_{\nu}/h_{\nu})$ is a closed subgroup of $\gal(\bar{k}/k)$ there exists a finite index subgroup $\Gamma\subset\gal(\bar{k}/k)$ containing $\gal(\bar{h}_{\nu}/h_{\nu})$ with a factorization $\gal(\bar{h}_{\nu}/h_{\nu})\to \Gamma\to \left(\ZZ/{\ell}\right)^{2}$. Since $k$, and hence $h_{\nu}$, contains a primitive $\ell^{2}$-rooth of $1$, then $\Gamma\to(\ZZ/\ell)^{2}$ does not lift to $\ZZ/{\ell^{2}}\rtimes_{\phi} \ZZ/{\ell}$ and we may choose $K/k$ as the field fixed by $\Gamma$.
\end{proof}

Let us now prove Theorem~\ref{theorem:cover}. Since $S$ is hyperbolic, up to a finite extension of $k$ by Lemma~\ref{lemma:free} we may assume that there exists a geometrically connected, finite étale Galois covering $U\to S$ with Galois group $\ZZ/{\ell^{2}}\rtimes_{\phi} \ZZ/{\ell}$; denote by $C\to S$ the induced $\left(\ZZ/\ell\right)^{2}$-cover. 

Let $K/k$, $\gal(\bar{K}/K)\to\left(\ZZ/\ell\right)^{2}$ be as in Lemma~\ref{lemma:semi}, the homomorphism corresponds to a $\left(\ZZ/\ell\right)^{2}$-torsor $T\to \spec K$. The twist $\fC=C_{K}\times^{\left(\ZZ/\ell\right)^{2}} T\to S_{K}$ is a torsor for the group scheme $\underline{\aut}(T)$ of $\left(\ZZ/\ell\right)^{2}$-equivariant automorphisms of $T$. Since $\left(\ZZ/\ell\right)^{2}$ is abelian, then $\underline{\aut}(T)\simeq \left(\ZZ/\ell\right)^{2}$ and hence $\fC\to S_{K}$ is a Galois $\left(\ZZ/\ell\right)^{2}$-cover. By Proposition~\ref{proposition:obstruction} the twist $\fC$ satisfies the section conjecture. 

\bibliographystyle{amsalpha}
\bibliography{index}

\end{document}